\begin{document}

\title{Sparse Portfolio Selection via Non-convex Fraction Function}
%\subtitle{Do you have a subtitle?\\ If so, write it here}

%\titlerunning{Short form of title}        % if too long for running head

\author{Angang Cui$^{1}$\and
        Jigen Peng$^{1,\ast}$\and
        Chengyi Zhang$^{2}$\and
        Haiyang Li$^{2}$\and
        Meng Wen$^{2}$
       %etc.
}

%\authorrunning{Short form of author list} % if too long for running head

\institute{$\ast$ Corresponding author\\
            Jigen Peng \at
            \email{jgpengxjtu@126.com;}\\
           1 School of Mathematics and Statistics, Xi'an Jiaotong University, Xi'an, 710049, China\\
           2 School of Science, Xi'an Polytechnic University, Xi'an, 710048, China
}

\date{Received: date / Accepted: date}
% The correct dates will be entered by the editor

\maketitle

\begin{abstract}
In this paper, a continuous and non-convex promoting sparsity fraction function is studied in two sparse portfolio selection models with and without short-selling constraints. Firstly, we study the properties of the optimal solution to the problem $(FP_{a,\lambda,\eta})$ including the first-order and the second optimality condition and the lower and upper bound of the absolute value for its nonzero entries. Secondly, we develop the thresholding representation theory of the problem $(FP_{a,\lambda,\eta})$. Based on it, we prove the existence of the resolvent operator of gradient of $P_{a}(x)$, calculate its analytic expression, and propose an iterative fraction penalty thresholding (IFPT) algorithm to solve the problem $(FP_{a,\lambda,\eta})$. Moreover, we also prove that the value of the regularization parameter $\lambda>0$ can not be chosen too large. Indeed, there exists $\bar{\lambda}>0$ such that the optimal solution to the problem $(FP_{a,\lambda,\eta})$ is equal to zero for any $\lambda>\bar{\lambda}$. At last, inspired by the thresholding representation theory of the problem $(FP_{a,\lambda,\eta})$, we propose an iterative nonnegative fraction penalty thresholding (INFPT) algorithm to solve the problem $(FP_{a,\lambda,\eta}^{\geq})$. Empirical results show that our methods, for some proper $a>0$, perform effective in finding the sparse portfolio weights with and without short-selling constraints.

\keywords{Markowitz mean-variance model\and Sparse portfolio selection\and Short-selling\and Non-convex fraction function\and IFPT algorithm\and INFPT algorithm}
% \PACS{PACS code1 \and PACS code2 \and more}
\subclass{65K05\and 90C26\and 90C90}
\end{abstract}

\section{Introduction}\label{section1}
The classical mean-variance (M-V) portfolio selection model \cite{mark1}, also known as Markowitz mean-variance model constructed in a frictionless world, has been widely used in
economic modeling of finance markets and asset pricing. In M-V model, the return and the risk of a portfolio are measured by the mean and the variance of the portfolio random returns,
respectively, and it aims to find the optimal asset weight vector that minimizes the portfolio variance, subject to the constraint that the portfolio exhibits a desired portfolio
return. It means investors need to invest in a large number of assets. The M-V portfolio theory believes that using diversified portfolio investment can effectively control the
portfolio risk. When the number of assets is typically large, it means investors need to invest in a large number of assets and the solution of the M-V model is usually non-zero on
almost all of the components. However, the large number of assets always lead to the high transaction costs and complexity of portfolio management and the M-V model becomes
numerically unstable \cite{bro8}. Therefore, almost all the investors can only invest in a limited number of assets.

The number restriction on assets motivates many researchers to study the sparse M-V portfolio selection problem, that is, get a sparse asset allocation (solution) with better
out-of-sample performances and to reduce the transaction costs and the complexity of portfolio management. This sparse problem is often called cardinality constrained portfolio
optimization, and some variations thereof, have been fairly intensively studied in \cite{arma2,ber3,bien4,chang5,crama6,gfiel7,bro8,got9,yena10,lore11,teng12}. For the sake of
uniformity, in this paper, we call it the sparse portfolio selection problem. Unfortunately, this sparse problem, motivated by the need of inducing sparsity on the selected portfolio
to reduce transaction costs, complexity of portfolio management, and instability of the solution is a difficult, in fact NP-hard, combinatorial problem (see \cite{bien4}).
In \cite{bro8}, the $\ell_{1}$-norm is proposed to promote the sparsity of assets in the portfolio, as argued by authors, helps inducing sparsity of the selected portfolio and can
be a remedy to the high instability of classic methods for portfolio selection when short-selling is permitted. However, $\ell_{1}$-regularization approach is not effective in
promoting sparsity in presence of budget and no-short-selling constraints \cite{lore11}. Moreover, it tends to lead to biased estimation by shrinking all the entries toward to zero
simultaneously, and sometimes results in over-penalization as the $\ell_{1}$-norm in compressed sensing (see \cite{dau13}).

Inspired by the good performance of the non-convex fraction function in image restoration \cite{gem14}, and based on authors' recent researches on fraction regularization in compressed sensing \cite{li15}, we propose two sparse fraction portfolio selection models with and without short-selling constraints in this paper. The proposed sparse fraction portfolio selection model with short-selling constraint can generate optimal portfolios with better sparsity than the portfolio selection models using $\ell_{1}$-regularization do, and the sparse fraction portfolio selection
model without short-selling constraint can also performs effective in finding the sparse portfolio weights.

The rest of the paper is organized as follows. In Section \ref{section2}, we review some sparse portfolio selection models, and then present two sparse portfolio selection models by introducing fraction regularization on portfolio weights. The Section 3 is devoted to discussing the properties of the optimal solution to the regularization problem $(FP_{a,\lambda,\eta})$ including the first-order and the second optimality condition and the lower and upper bound of the absolute value for its nonzero entries. Moreover, we also proved that the value of the regularization parameter $\lambda$ can not be chosen too large. Indeed, there exists $\bar{\lambda}>0$ such that the optimal solution to the problem $(FP_{a,\lambda,\eta})$ is equal to zero for any $\lambda>\bar{\lambda}$. In Section \ref{section3}, we propose the IFPT algorithm to solve the problem $(FP_{a,\lambda,\eta})$ and, inspired by the thresholding representation of the IFPT algorithm, the INFPT algorithm is given to solve the problem $(FP_{a,\lambda,\eta}^{\geq})$. In Section \ref{section4}, we present the experiments with a series of portfolio selection applications to demonstrate the effectiveness of the new algorithms. We conclude this paper in Section \ref{section5}.

\section{Sparse portfolio selection models}\label{section2}
\subsection{The M-V portfolio selection model}\label{subsection2-1}
Let $r_{t}=(r_{1,t},r_{2,t},\cdots,r_{n,t})^{\top}\in \mathbb{R}^{n}$ be the vector of asset returns at time $t$, $t=1,2,\cdots,T$, $E(r_{t})=\mu$ and $Q=E[(r_{t}-\mu)(r_{t}-\mu)^{\top}]$ be the
mean return vector and the covariance matrix of asset returns, where $r_{it}$ is the return of asset $i$ at time $t$. The traditional Markowitz portfolio selection model (see \cite{mark1}) can be expressed as follows
\begin{equation}\label{equ1}
\begin{array}{llll}
&&\displaystyle\min_{x\in \mathbb{R}^{n}}\ \ \ \ \sigma:=x^{\top}Qx\\
&&\mathrm{\ s.t.}\ \ \ \ \ \ \mu^{\top}x=\beta\\
&&\ \ \ \ \ \ \ \ \ \ \ e_{n}^{\top}x=1,
\end{array}
\end{equation}
where $x=(x_{1},x_{2},\cdots,x_{n})^{\top}\in \mathbb{R}^{n}$ is the vector of asset weights, $e_{n}\in \mathbb{R}^{n}$ is the vector of all ones, $\beta$ is the minimum expected return from the portfolio that is expected by an investor. Note also that, if the non-short-selling (without short-selling) constraint $x\geq0$ is added to problem (\ref{equ1}), we can recast this problem
as portfolio selection model without short-selling constraint
\begin{equation}\label{equ100}
\begin{array}{llll}
&&\displaystyle\min_{x\in \mathbb{R}^{n}}\ \ \ \ \sigma:=x^{\top}Qx\\
&&\mathrm{\ s.t.}\ \ \ \ \ \ \mu^{\top}x=\beta\\
&&\ \ \ \ \ \ \ \ \ \ \ e_{n}^{\top}x=1\\
&&\ \ \ \ \ \ \ \ \ \ \ x\geq0.
\end{array}
\end{equation}

Since $Q=E[(r_{t}-\mu)(r_{t}-\mu)^{\top}]$, we have
$$x^{\top}Qx=E[|\beta-x^{\top}r_{t}|^{2}]=\frac{1}{T}\|Rx-\beta e_{T}\|_{2}^{2}$$
where $R=(r_{1},r_{2},\cdots,r_{T})^{\top}\in \mathbb{R}^{T\times n}$. Then the Markowitz portfolio selection model (\ref{equ1}) can be expressed as follows
\begin{equation}\label{equ2}
\begin{array}{llll}
&&\displaystyle\min_{x\in \mathbb{R}^{n}}\ \ \ \ \frac{1}{T}\|Rx-\beta e_{T}\|_{2}^{2}\\
&&\mathrm{\ s.t.}\ \ \ \ \ \ \mu^{\top}x=\beta\\
&&\ \ \ \ \ \ \ \  \ \ \ e_{n}^{\top}x=1.
\end{array}
\end{equation}

Usually, both the vector $\mu$ and the matrix $Q$ are not known analytically but can be estimated using historical data.
%and $\mu=\frac{1}{T}\sum_{t=1}^{T}r_{t}$.

\subsection{The sparse portfolio selection model}\label{subsection2-2}
The sparsity requirement comes from the  real world practice, where the administration of a portfolio made up of a large number of assets, possibly with very small holdings for some of them, is clearly not desirable because of the transactions costs and the complexity of management. The sparsity restricted model is often called cardinality constrained portfolio selection problem by limiting the number of assets in the portfolio, and defined as follows

\begin{equation}\label{equ3}
\begin{array}{llll}
&&\displaystyle\min_{x\in \mathbb{R}^{n}}\ \ \ \ \frac{1}{T}\|Rx-\beta e_{T}\|_{2}^{2}\\
&&\mathrm{\ s.t.}\ \ \ \ \ \ \mu^{\top}x=\beta\\
&&\ \ \ \ \ \ \ \  \ \ \ e_{n}^{\top}x=1\\
&&\ \ \ \ \ \ \ \  \ \ \ \|x\|_{0}\leq k,
\end{array}
\end{equation}
where the $\|x\|_{0}$ is the $\ell_{0}$-norm of $x$ indicates the number of nonzero components of $x$, and the parameter $k$ is the chosen limit of assets to be held in the portfolio.
For the sake of uniformity, we call it the sparse portfolio selection model in this paper. Unfortunately, sparse problem (\ref{equ3}) motivated
by the need of inducing sparsity on the selected portfolio is a difficult, in fact NP-hard, combinatorial problem (see \cite{bien4}).

In \cite{bro8}, an important sparse portfolio selection model, based on $\ell_{1}$-norm regularization, is proposed to promote the sparsity of assets in the portfolio, as argued by authors, helps
induce sparsity of the selected portfolio and can be a remedy to the high instability of classic methods for portfolio selection when short-selling is permitted. This $\ell_{1}$-norm
regularization portfolio selection model can be viewed as the following mathematical form

\begin{equation}\label{equ4}
\begin{array}{llll}
&&\displaystyle\min_{x\in \mathbb{R}^{n}}\ \ \ \ \frac{1}{T}\|Rx-\beta e_{T}\|_{2}^{2}+\lambda\|x\|_{1}\\
&&\mathrm{\ s.t.}\ \ \ \ \ \ \mu^{\top}x=\beta\\
&&\ \ \ \ \ \ \ \ \ \ \ e_{n}^{\top}x=1,
\end{array}
\end{equation}
where $\lambda>0$ is a regularization parameter, and $\|x\|_{1}=\sum_{i=1}^{n}|x_{i}|$. Problem (\ref{equ4}) is called sparse and stable M-V portfolio selection model in \cite{bro8},
however, sparsity of the resulting portfolio is not guaranteed from problem (\ref{equ4}), since the $\ell_{1}$-norm of the asset weights will result in a constant value of one when asset
weights is nonnegative. Moreover, it tends to lead to biased estimation by shrinking all the entries toward to zero simultaneously, and sometimes results in over-penalization as the
$\ell_{1}$-norm in compressed sensing (see \cite{dau13}).
%and it is difficult to choose a proper value for the penalty parameter $\lambda$, when a penalty function method is employed to solve the problem (\ref{equ4}).

%In \cite{xu17}, an improvement to problem (\ref{equ4}) is given by using $\ell_{1/2}$ regularization to replace the $\ell_{1}$ regularization
%\begin{equation}\label{equ60}
%\begin{array}{llll}
%&&\displaystyle\min_{x\in \Re^{n}}\ \ \ \ \frac{1}{T}\|Rx-\beta e_{T}\|_{2}^{2}+\lambda\|x\|_{1/2}^{1/2}\\
%&&\mathrm{\ s.t.}\ \ \ \ \ \ \hat{\mu}^{\top}x=\beta\\
%&&\ \ \ \ \ \ \ \ \ \ \ e_{n}^{\top}x=1,
%\end{array}
%\end{equation}
%where $\|x\|_{1/2}^{1/2}$ is the $\ell_{1/2}$ quasi-norm of the vector $x\in\Re^{n}$, and is defined by
%$$\|x\|_{1/2}^{1/2}=\sum_{i=1}^{n}|x_{i}|^{\frac{1}{2}}.$$
%And the authors in \cite{xu17} present a non-convex penalty half thresholding algorithm to solve the proposed portfolio selection model (\ref{equ60}). Empirical results show
%that the out-of-sample performance of the optimal portfolios generated from the proposed model is better than those of the optimal portfolios generated from M-V portfolio selection
%model with $\ell_{1}$ regularization on portfolio weights. It means that the non-convex algorithms can really make a better select in some portfolio selection problems. This brings
%our attention back to the non-convex optimizations.

\subsection{The new sparse portfolio selection model}\label{subsection2-3}
Inspired by the good performance of the non-convex fraction function in image restoration and compressed sensing (see, e.g., \cite{gem14,li15}), we take the non-convex function
$P_{a}(x)$ to substitute the $\ell_{1}$-norm in problem (\ref{equ4}).

The function $P_{a}(x)$ is defined as
\begin{equation}\label{equ5}
P_{a}(x)=\sum_{i=1}^{n}\rho_{a}(x_{i}),\ \ \ a>0
\end{equation}
where
\begin{equation}\label{equ6}
\rho_{a}(t)=\frac{a|t|}{a|t|+1}
\end{equation}
is the fraction function, and the parameter $a\in(0,+\infty)$. It is easy to verify that $\rho_{a}(t)$ is increasing and concave in $t\in[0,+\infty)$. With the change of
parameter $a$, we have
\begin{equation}\label{equ7}
\lim_{a\rightarrow+\infty}\rho_{a}(t)=\left\{
    \begin{array}{ll}
      0, & {\ \ \mathrm{if} \ t=0;} \\
      1, & {\ \ \mathrm{if} \ t\neq 0,}
    \end{array}
  \right.
\end{equation}
and the non-convex function $P_{a}(x)$ interpolates the $\ell_{0}$-norm of vector $x$:
\begin{equation}\label{equ8}
\lim_{a\rightarrow+\infty}P_{a}(x)=\lim_{a\rightarrow+\infty}\sum_{x_{i}\neq 0}\rho_{a}(x_{i})=\|x\|_{0}.
\end{equation}
\begin{figure}[h!]
 \centering
 \includegraphics[width=0.7\textwidth]{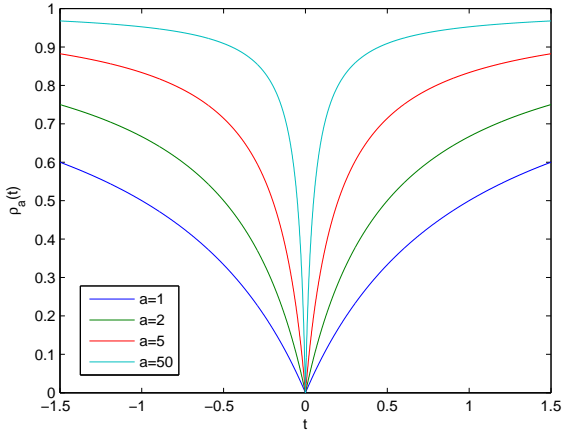}
\caption{The behavior of the fraction function $p_{a}(t)$ for various values of $a$.}
\label{fig:1}
\end{figure}

By this substitution, we translate the problem (\ref{equ4}) with short-selling constraint into the following problem
\begin{equation}\label{equ9}
\begin{array}{llll}
&&\displaystyle\min_{x\in \mathbb{R}^{n}}\ \ \ \ \frac{1}{T}\|Rx-\beta e_{T}\|_{2}^{2}+\lambda P_{a}(x)\\
&&\mathrm{\ s.t.}\ \ \ \ \ \ \mu^{\top}x=\beta\\
&&\ \ \ \ \ \ \ \ \ \ \ e_{n}^{\top}x=1.
\end{array}
\end{equation}
If short-selling is not permitted (without short-selling constraint), the model for the problem (\ref{equ9}) has the form
\begin{equation}\label{equ10}
\begin{array}{llll}
&&\displaystyle\min_{x\in \mathbb{R}^{n}}\ \ \ \ \frac{1}{T}\|Rx-\beta e_{T}\|_{2}^{2}+\lambda P_{a}(x)\\
&&\mathrm{\ s.t.}\ \ \ \ \ \ \mu^{\top}x=\beta\\
&&\ \ \ \ \ \ \ \ \ \ \ e_{n}^{\top}x=1\\
&&\ \ \ \ \ \ \ \ \ \ \ x\geq0.
\end{array}
\end{equation}
Meanwhile, problems (\ref{equ9}) and (\ref{equ10}) could be expressed in the matrix-vector form
\begin{equation}\label{equ11}
\begin{array}{llll}
&&(FP_{a,\lambda})\ \ \ \ \ \displaystyle\min_{x\in \mathbb{R}^{n}}\ \ \ \ \frac{1}{T}\|Rx-\beta e_{T}\|_{2}^{2}+\lambda P_{a}(x)\\
&&\ \ \ \ \ \ \ \ \ \ \ \ \ \ \ \mathrm{\ s.t.}\ \ \ \ \ \ Ax=b
\end{array}
\end{equation}
and
\begin{equation}\label{equ12}
\begin{array}{llll}
&&(FP_{a,\lambda}^{\geq})\ \ \ \ \ \displaystyle\min_{x\in \mathbb{R}^{n}}\ \ \ \ \frac{1}{T}\|Rx-\beta e_{T}\|_{2}^{2}+\lambda P_{a}(x)\\
&&\ \ \ \ \ \ \ \ \ \ \ \ \ \ \ \ \mathrm{\ s.t.}\ \ \ \ \ \ Ax=b\\
&&\ \ \ \ \ \ \ \ \ \ \ \ \ \ \ \ \ \ \ \ \ \ \ \ \ \ \ \ x\geq0,
\end{array}
\end{equation}
where $b=(\beta,1)^{\top}\in \mathbb{R}^{2}$, $A=(\mu,e_{n})^{\top}\in \mathbb{R}^{2\times n}$.

The penalty function problems for $(FP_{a,\lambda})$ and $(FP_{a,\lambda}^{\geq})$ are given by
\begin{equation}\label{equ13}
(FP_{a,\lambda,\eta})\ \ \ \ \ \displaystyle\min_{x\in \mathbb{R}^{n}}\ \ \ \ \frac{1}{T}\|Rx-\beta e_{T}\|_{2}^{2}+\lambda P_{a}(x)+\eta\|Ax-b\|_{2}^{2}
\end{equation}
and
\begin{equation}\label{equ14}
(FP_{a,\lambda,\eta}^{\geq})\ \ \ \ \ \displaystyle\min_{\mathbb{R}^{n}\ni x\geq0}\ \ \ \ \frac{1}{T}\|Rx-\beta e_{T}\|_{2}^{2}+\lambda P_{a}(x)+\eta\|Ax-b\|_{2}^{2}
\end{equation}
where $\eta>0$ is the penalty parameter.

\section{Properties of the problem $(FP_{a,\lambda,\eta})$}\label{section3}
In this section, we discuss some properties of the problem $(FP_{a,\lambda,\eta})$ including the first-order and the second optimality condition and the lower and upper bounds of the
absolute value for its nonzero entries. Moreover, we also prove that the value of the regularization parameter $\lambda$ can not be chosen too large. Indeed, there exists $\bar{\lambda}>0$ such that the optimal solution to the problem $(FP_{a,\lambda,\eta})$ is equal to zero for any $\lambda>\bar{\lambda}$.

\subsection{Lower and upper bounds of the optimal solution}\label{section3-1}

\begin{theorem}\label{the1}
(The first-order optimality condition) Let $x^{\ast}$ be any solution to the problem $(FP_{a,\lambda,\eta})$ and for any $h\in \mathbb{R}^{n}$ with $\mathrm{supp}(h)\subseteq \mathrm{supp}(x^{\ast})$
($\mathrm{supp}(x^{\ast})$ represents the support of vector $x^{\ast}$, $\sharp=\{x_{i}| x_{i}\neq 0\}$),
\begin{equation}\label{equ15}
\frac{2}{T}\langle \beta e_{T}-Rx^{*},Rh\rangle+2\eta\langle b-Ax^{\ast},Ah\rangle=\lambda\sum_{i\in \mathrm{supp}(x^{\ast})}\frac{ah_{i}\mathrm{sign}(x_{i}^{\ast})}{(1+a|x_{i}^{\ast}|)^{2}}.
\end{equation}
%(2) For any $h\in \Re^{n}$ with $\mathrm{supp}(h)\subseteq \mathrm{Csupp}(x^{\ast})$,
%\begin{equation}\label{equ16}
%\begin{array}{llll}
%&&|2\eta\langle Ax^{*}-b,Ah\rangle+\frac{2}{T}\langle Rx^{*}-\beta e_{T},Rh\rangle|\\\\
%&&\leq 2\cdot\max\bigg\{\sqrt{2\lambda\eta\|Ah\|_{2}^{2}\|h\|_{0}},\sqrt{2\lambda\frac{1}{T}\|Rh\|_{2}^{2}\|h\|_{0}}\bigg\}.
%\end{array}
%\end{equation}
\end{theorem}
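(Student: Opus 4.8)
The plan is to exploit the fact that, although $P_{a}$ is non-smooth at the origin, the fraction function $\rho_{a}$ is differentiable away from zero; hence along any direction $h$ with $\mathrm{supp}(h)\subseteq\mathrm{supp}(x^{\ast})$ the objective of the \emph{unconstrained} penalty problem $(FP_{a,\lambda,\eta})$ restricts to a smooth function of the step size near $0$, and the classical stationarity condition applies. Concretely, I would fix a minimizer $x^{\ast}$ and such an $h$, and set
\begin{equation*}
g(t):=\frac{1}{T}\|R(x^{\ast}+th)-\beta e_{T}\|_{2}^{2}+\lambda P_{a}(x^{\ast}+th)+\eta\|A(x^{\ast}+th)-b\|_{2}^{2}.
\end{equation*}

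The key technical observation is a sign/support stability fact: for $|t|$ small enough — it suffices to take $|t|<\min\{|x_{i}^{\ast}|/|h_{i}|:\ i\in\mathrm{supp}(x^{\ast}),\ h_{i}\neq0\}$, which is a strictly positive number because $\mathrm{supp}(x^{\ast})$ is finite and $|x_{i}^{\ast}|>0$ on it — each coordinate $x_{i}^{\ast}+th_{i}$ with $i\in\mathrm{supp}(x^{\ast})$ remains nonzero and retains the sign of $x_{i}^{\ast}$, while for $i\notin\mathrm{supp}(x^{\ast})$ one has $h_{i}=0$ so that coordinate stays $0$. On such an interval, $P_{a}(x^{\ast}+th)=\sum_{i\in\mathrm{supp}(x^{\ast})}\rho_{a}(x_{i}^{\ast}+th_{i})+\mathrm{const}$, and since $\rho_{a}(s)=1-\frac{1}{a|s|+1}$ is $C^{\infty}$ for $s\neq0$ with $\rho_{a}'(s)=\frac{a\,\mathrm{sign}(s)}{(a|s|+1)^{2}}$, the function $g$ is differentiable at $t=0$.

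Next I would differentiate $g$ at $t=0$ term by term via the chain rule. The two quadratic terms contribute $\frac{2}{T}\langle Rx^{\ast}-\beta e_{T},Rh\rangle$ and $2\eta\langle Ax^{\ast}-b,Ah\rangle$, and the penalty term contributes $\lambda\sum_{i\in\mathrm{supp}(x^{\ast})}\frac{a\,h_{i}\,\mathrm{sign}(x_{i}^{\ast})}{(1+a|x_{i}^{\ast}|)^{2}}$. Because $x^{\ast}$ is a minimizer over all of $\mathbb{R}^{n}$ and $t=0$ is an interior point of the interval on which $g$ is defined and differentiable, $g$ has an unconstrained local minimum at $0$, so $g'(0)=0$. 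Rearranging — moving the two inner-product terms to the other side and flipping their signs, so that $\langle Rx^{\ast}-\beta e_{T},Rh\rangle$ becomes $\langle\beta e_{T}-Rx^{\ast},Rh\rangle$ and similarly for the $A$-term — yields precisely identity (\ref{equ15}).

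The only genuinely delicate step is the support/sign stability argument that guarantees differentiability of $g$ at $0$; once that is in place the rest is the chain rule together with the elementary derivative of $\rho_{a}$. I would therefore state the explicit bound on $|t|$ and the positivity of the minimum there. Note that no subdifferential calculus is required, precisely because we only perturb $x^{\ast}$ within directions supported on $\mathrm{supp}(x^{\ast})$, where $P_{a}$ is smooth.
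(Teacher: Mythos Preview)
Your proposal is correct and follows essentially the same idea as the paper: both exploit the sign/support stability of $x^{\ast}+th$ for small $t$ when $\mathrm{supp}(h)\subseteq\mathrm{supp}(x^{\ast})$ to reduce to a smooth one-variable problem and then read off the first-order condition. The only cosmetic difference is that the paper derives a one-sided inequality by dividing by $\tau>0$ and letting $\tau\to0$, then replaces $h$ by $-h$ to obtain the reverse inequality, whereas you argue directly that $g$ is differentiable at $0$ and invoke $g'(0)=0$; your version is marginally cleaner but the content is the same.
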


\begin{proof}
Let $x^{*}$ be any solution to the problem $(FP_{a,\lambda,\eta})$. Then, for all $\tau\in \mathbb{R}$ and $h\in \mathbb{R}^{n}$, the following inequality holds
\begin{eqnarray*}
 &&\frac{1}{T}\|Rx^{\ast}-\beta e_{T}\|_{2}^{2}+\lambda P_{a}(x^{*})+\eta\| Ax^{*}-b\|_{2}^{2}\\
 &&\leq\frac{1}{T}\|R(x^{\ast}+\tau h)-\beta e_{T}\|_{2}^{2}+\lambda P_{a}(x^{*}+\tau h)+\eta\| A(x^{*}+\tau h)-b\|_{2}^{2},
\end{eqnarray*}
equivalently,
\begin{equation}\label{equ16}
\begin{array}{llll}
&&\displaystyle\frac{\tau^{2}}{T}\|Rh\|_{2}^{2}+\eta \tau^{2}\|Ah\|_{2}^{2}+\frac{2\tau}{T} \langle Rx^{*}-\beta e_{T},Rh\rangle\\
&&+2\eta \tau\langle Ax^{*}-b,Ah\rangle+\lambda(P_{a}(x^{*}+\tau h)-P_a(x^{*}))\geq 0.
\end{array}
\end{equation}
If $\mathrm{supp}(h)\subseteq \mathrm{supp}(x^{*})$, then for a small enough $\tau$ the vector $x^{\ast}$, $x^{\ast}+\tau h$ and $x^{\ast}-\tau h$ have the same sign, and
$$P_{a}(x^{*}+\tau h)-P_{a}(x^{*})=\sum_{i\in \mathrm{supp}(x^{*})}\Big(\frac{a|x^{*}_{i}+\tau h_{i}|}{1+a|x^{*}_{i}+\tau h_{i}|}-\frac{a|x^{*}_{i}|}{1+a|x^{*}_{i}|}\Big).$$
Dividing by $\tau>0$ both sides of the inequality (\ref{equ17}) and letting $\tau\rightarrow 0$ yield
\begin{equation}\label{equ17}
\frac{2}{T}\langle Rx^{*}-\beta e_{T},Rh\rangle+2\eta\langle Ax^{*}-b,Ah\rangle+\lambda\sum_{i\in \mathrm{supp}(x^{*})}\frac{ah_{i}\mathrm{sign}(x_{i}^{\ast})}{(1+a|x^{*}_{i}|)^2}\geq 0.
\end{equation}
Obviously, the above inequality also holds for $-h$ which leads to the equality (\ref{equ15}).
%(2)\ If $\mathrm{supp}(h)\subseteq \mathrm{Csupp}(x^{*})$, then for all $\tau\in \Re$,
%$$P_{a}(x^{*}+\tau h)-P_{a}(x^{*})=\sum_{i\in \mathrm{Csupp}(x^{*})}\frac{a|\tau h_{i}|}{1+a|\tau h_{i}|}.$$
%Hence it follows the inequality (\ref{equ17}) that
%\begin{eqnarray*}
%&&\frac{\tau^{2}}{T}\|Rh\|_{2}^{2}+\eta \tau^{2}\|Ah\|_{2}^{2}+\frac{2\tau}{T} \langle Rx^{*}-\beta e_{T},Rh\rangle\\
%&&+2\eta \tau\langle Ax^{*}-b,Ah\rangle+\lambda\sum_{i\in \mathrm{Csupp}(x^{*})}\frac{a|\tau h_{i}|}{1+a|\tau h_{i}|}\geq 0.
%\end{eqnarray*}
%So, for all $\tau>0$, we have
%\begin{eqnarray*}
%  &&|2\eta\langle Ax^{*}-b,Ah\rangle+\frac{2\tau}{T}\langle Rx^{*}-\beta e_{T},Rh\rangle|\\
%  &&\leq\eta \tau\|Ah\|_{2}^{2}+\frac{\tau}{T}\|Rh\|_{2}^{2}+\lambda\sum_{i\in C\mathrm{supp}(x^{*})}\frac{a|\tau h_{i}|}{(1+a|\tau h_{i}|)t}\\
%  &&\leq\eta \tau\|Ah\|_{2}^{2}+\frac{\tau}{T}\|Rh\|_{2}^{2}+\lambda \frac{\|h\|_{0}}{\tau}\\
%  &&=\eta \tau\|Ah\|_{2}^{2}+\lambda \frac{\|h\|_{0}}{2\tau}+\frac{\tau}{T}\|Rh\|_{2}^{2}+\lambda \frac{\|h\|_{0}}{2\tau}.
%\end{eqnarray*}
%Thus
%\begin{eqnarray*}
%  &&|2\eta\langle Ax^{*}-b,Ah\rangle+\frac{2}{T}\langle Rx^{*}-\beta e_{T},Rh\rangle|\\
%  &&\leq 2\cdot\max\bigg\{\sqrt{2\lambda\eta\|Ah\|_{2}^{2}\|h\|_{0}},\sqrt{\frac{2\lambda}{T}\|Rh\|_{2}^{2}\|h\|_{0}}\bigg\}.
%\end{eqnarray*}
This completes the proof.
\end{proof}

Choosing $h$ as the $i^{th}$ base vector $e_{i}$ for each $i=1,2,\cdots,n$ in (\ref{equ15}), we can derive the following corollary.

\begin{corollary}\label{cor1}
Suppose that $x^{*}$ is the solution of the problem $(FP_{a,\lambda,\eta})$. Then, for $i\in \mathrm{supp}(x^{*})$,
\begin{equation}\label{equ18}
\frac{2}{T}(R^T(\beta e_{T}-Rx^{*}))_{i}+2\eta(A^T(b-Ax^{*}))_{i}=\frac{a\lambda}{(1+a|x^{*}_{i}|)^{2}}.
\end{equation}
%and for $i\in C\mathrm{supp}(x^{*})$,
%\begin{equation}\label{equ19}
%\begin{array}{llll}
%&&\mid\frac{2}{T}(R^T(\beta e_{T}-Rx^{*}))_{i}+2\eta(A^T(b-Ax^{*}))_{i}\mid\\\\
%&&\leq 2\cdot\max\bigg\{\sqrt{2\lambda\eta\|Ah\|_{2}^{2}\|h\|_{0}},\sqrt{\frac{2\lambda}{T}\|Rh\|_{2}^{2}\|h\|_{0}}\bigg\}.
%\end{array}
%\end{equation}
\end{corollary}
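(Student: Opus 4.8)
The plan is to specialize the first-order optimality identity (\ref{equ15}) of Theorem \ref{the1} to the coordinate directions. Fix an index $i\in\mathrm{supp}(x^{*})$. Since $x_i^{*}\neq 0$, the standard basis vector $e_{i}$ satisfies $\mathrm{supp}(e_{i})=\{i\}\subseteq\mathrm{supp}(x^{*})$, so $h=e_{i}$ is an admissible test direction in Theorem \ref{the1} and the equality (\ref{equ15}) holds for it. This is the whole content of the hint ``choosing $h$ as the $i^{th}$ base vector $e_i$''.

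Next I would evaluate each term of (\ref{equ15}) at $h=e_{i}$. For the two inner products on the left-hand side I would use the adjoint (transpose) identities $\langle u,Re_{i}\rangle=(R^{\top}u)_{i}$ and $\langle v,Ae_{i}\rangle=(A^{\top}v)_{i}$, valid for any $u\in\mathbb{R}^{T}$ and $v\in\mathbb{R}^{2}$; applied with $u=\beta e_{T}-Rx^{*}$ and $v=b-Ax^{*}$ this turns the left-hand side of (\ref{equ15}) into $\tfrac{2}{T}(R^{\top}(\beta e_{T}-Rx^{*}))_{i}+2\eta(A^{\top}(b-Ax^{*}))_{i}$, exactly the left-hand side of (\ref{equ18}). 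On the right-hand side of (\ref{equ15}) the sum over $j\in\mathrm{supp}(x^{*})$ has $(e_{i})_{j}=\delta_{ij}$, so only the $j=i$ term survives and the sum collapses to $\dfrac{a\lambda\,\mathrm{sign}(x_{i}^{*})}{(1+a|x_{i}^{*}|)^{2}}$.

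The only delicate point is the $\mathrm{sign}(x_{i}^{*})$ factor appearing from the collapsed sum, which is absent in (\ref{equ18}). I would handle this by testing instead with $h=\mathrm{sign}(x_{i}^{*})\,e_{i}$, which is still supported on $\{i\}\subseteq\mathrm{supp}(x^{*})$: then the left-hand side picks up a common factor $\mathrm{sign}(x_{i}^{*})$ and the right-hand side becomes $\dfrac{a\lambda\,\mathrm{sign}(x_{i}^{*})^{2}}{(1+a|x_{i}^{*}|)^{2}}=\dfrac{a\lambda}{(1+a|x_{i}^{*}|)^{2}}$, after which dividing both sides by $\mathrm{sign}(x_{i}^{*})\in\{-1,+1\}$ yields precisely (\ref{equ18}). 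Since $i\in\mathrm{supp}(x^{*})$ was arbitrary, the corollary follows. I do not expect any real obstacle here: the statement is a direct coordinatewise reading of Theorem \ref{the1}, and the sign bookkeeping above is the sole thing that needs a word of justification.
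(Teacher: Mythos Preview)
Your approach is exactly the paper's: substitute $h=e_{i}$ into (\ref{equ15}) and read off the $i$th coordinate. You are also right to flag the $\mathrm{sign}(x_{i}^{*})$ discrepancy between what (\ref{equ15}) actually gives and what (\ref{equ18}) states.

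However, your proposed remedy does not work. Testing with $h=\mathrm{sign}(x_{i}^{*})\,e_{i}$ you correctly obtain
\[
\mathrm{sign}(x_{i}^{*})\Big(\tfrac{2}{T}(R^{\top}(\beta e_{T}-Rx^{*}))_{i}+2\eta(A^{\top}(b-Ax^{*}))_{i}\Big)=\frac{a\lambda}{(1+a|x_{i}^{*}|)^{2}},
\]
but then dividing \emph{both} sides by $\mathrm{sign}(x_{i}^{*})$ sends the right-hand side to $\mathrm{sign}(x_{i}^{*})\,\dfrac{a\lambda}{(1+a|x_{i}^{*}|)^{2}}$, not to $\dfrac{a\lambda}{(1+a|x_{i}^{*}|)^{2}}$. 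No linear choice of $h$ supported at $i$ can remove the sign: (\ref{equ15}) is linear in $h$, so any scalar multiple of $e_{i}$ produces the same identity up to a common nonzero factor, and that identity always carries $\mathrm{sign}(x_{i}^{*})$ on the right.

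The upshot is that (\ref{equ18}) as printed is missing a $\mathrm{sign}(x_{i}^{*})$ on the right-hand side (equivalently, the displayed equality holds with the left-hand side multiplied by $\mathrm{sign}(x_{i}^{*})$); this is consistent with how the first-order condition is written later in the paper (see the $\mathrm{Diag}(\mathrm{sign}(z^{*}))$ term in (\ref{equ26})). Your derivation up to the point where you noted the sign is correct and complete; simply state the corrected form rather than trying to manipulate the sign away.
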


Following the analysis adopted above, we can further establish the following optimality condition.

\begin{theorem}\label{the2}
(The second-order optimality condition) Every solution $x^{*}$ to the problem $(FP_{a,\lambda,\eta})$ satisfies the following condition:\\\\
(1) For all $h\in \mathbb{R}^{n}$ with $\mathrm{supp}(h)\subseteq \mathrm{supp}(x^{*})$,
\begin{equation}\label{equ19}
\frac{1}{T}\|Rh\|_{2}^{2}+\eta\|Ah\|_{2}^{2}\geq\lambda\sum_{i\in \mathrm{supp}(x^{*})}\frac{2a^2h_{i}^{2}}{(1+a|x^{*}_{i}|)^{3}}.
\end{equation}
(2) Moreover, for all $i\in \mathrm{supp}(x^{*})$ and $a>\frac{\frac{1}{T}\|R_{i}\|_{2}^{2}+\eta\|A_{i}\|_{2}^{2}}{\sqrt{\lambda}}$, it holds that
\begin{equation}\label{equ20}
|x_i^{*}|\geq\frac{\sqrt{\lambda}}{\frac{1}{T}\|R_{i}\|_{2}^{2}+\eta\|A_{i}\|_{2}^{2}}-\frac{1}{a}.
\end{equation}
\end{theorem}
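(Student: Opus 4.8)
The plan is to push the variational inequality (\ref{equ16}) from the proof of Theorem \ref{the1} one order further. Fix a solution $x^{*}$ and a vector $h\in\mathbb{R}^{n}$ with $\mathrm{supp}(h)\subseteq\mathrm{supp}(x^{*})$, and let $\varphi(\tau)$ be the objective of $(FP_{a,\lambda,\eta})$ evaluated at $x^{*}+\tau h$. The key observation is that, because $\mathrm{supp}(h)\subseteq\mathrm{supp}(x^{*})$, for all sufficiently small $|\tau|$ each entry $x_{i}^{*}+\tau h_{i}$ with $i\in\mathrm{supp}(x^{*})$ keeps the sign of $x_{i}^{*}$; hence near $\tau=0$ each term $\rho_{a}(x_{i}^{*}+\tau h_{i})$ coincides with the smooth map $\tau\mapsto \frac{a(|x_{i}^{*}|+\tau\,\mathrm{sign}(x_{i}^{*})h_{i})}{1+a(|x_{i}^{*}|+\tau\,\mathrm{sign}(x_{i}^{*})h_{i})}$, so $\varphi$ is $C^{2}$ on a neighbourhood of $0$. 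Since $x^{*}$ solves $(FP_{a,\lambda,\eta})$ it minimizes the objective over $\mathbb{R}^{n}$, so $\tau=0$ is a minimizer of $\varphi$ over $\mathbb{R}$; therefore $\varphi'(0)=0$ and $\varphi''(0)\geq 0$.

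The identity $\varphi'(0)=0$ merely reproduces the first-order condition (\ref{equ15}); the new content is $\varphi''(0)\geq 0$. In $\varphi''(0)$ the two squared-norm terms $\frac{1}{T}\|R(x^{*}+\tau h)-\beta e_{T}\|_{2}^{2}$ and $\eta\|A(x^{*}+\tau h)-b\|_{2}^{2}$ contribute $\frac{2}{T}\|Rh\|_{2}^{2}+2\eta\|Ah\|_{2}^{2}$, while, using $\rho_{a}''(t)=-2a^{2}/(1+a|t|)^{3}$ for $t\neq 0$, each index $i\in\mathrm{supp}(x^{*})$ contributes $-2\lambda a^{2}h_{i}^{2}/(1+a|x_{i}^{*}|)^{3}$. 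Rearranging $\varphi''(0)\geq 0$ yields (\ref{equ19}). Equivalently, one substitutes the second-order Taylor expansion of $P_{a}(x^{*}+\tau h)$ into (\ref{equ16}), uses (\ref{equ15}) to cancel the terms linear in $\tau$, divides by $\tau^{2}$, and lets $\tau\to 0$.

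For part (2), specialize (\ref{equ19}) to $h=e_{i}$, the $i$-th coordinate vector, for a fixed $i\in\mathrm{supp}(x^{*})$. Then $Rh=R_{i}$, $Ah=A_{i}$, only the $i$-th summand survives on the right-hand side, and (\ref{equ19}) collapses to the scalar inequality $\frac{1}{T}\|R_{i}\|_{2}^{2}+\eta\|A_{i}\|_{2}^{2}\geq 2a^{2}\lambda/(1+a|x_{i}^{*}|)^{3}$. Solving this inequality for $1+a|x_{i}^{*}|$, and hence for $|x_{i}^{*}|$, produces a lower bound of the form displayed in (\ref{equ20}); the hypothesis $a>\big(\frac{1}{T}\|R_{i}\|_{2}^{2}+\eta\|A_{i}\|_{2}^{2}\big)/\sqrt{\lambda}$ is precisely the condition under which that lower bound is strictly positive, so that (\ref{equ20}) is not vacuous.

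The only point that needs genuine care is the smoothness step in part (1): one must justify that restricting $\mathrm{supp}(h)$ to $\mathrm{supp}(x^{*})$ removes the nondifferentiable kink of $\rho_{a}$ at the origin, so that $\tau\mapsto P_{a}(x^{*}+\tau h)$ is really $C^{2}$ near $0$ and term-by-term second differentiation (or Taylor's theorem with remainder applied to (\ref{equ16})) is legitimate. Once that is in place, everything else — evaluating $\varphi''(0)$ and the one-line coordinatewise rearrangement in part (2) — is routine.
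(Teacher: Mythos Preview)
Your treatment of part (1) is correct and is essentially the paper's argument: both restrict to $\mathrm{supp}(h)\subseteq\mathrm{supp}(x^{*})$ so that $\tau\mapsto P_{a}(x^{*}+\tau h)$ is $C^{2}$ near $0$, use the first-order condition (\ref{equ15}) to cancel the linear-in-$\tau$ terms in (\ref{equ16}), divide by $\tau^{2}$, and let $\tau\to 0$ (equivalently, read off $\varphi''(0)\geq 0$).

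Part (2), however, has a genuine gap. Specializing (\ref{equ19}) to $h=e_{i}$ gives
\[
\frac{1}{T}\|R_{i}\|_{2}^{2}+\eta\|A_{i}\|_{2}^{2}\;\geq\;\frac{2a^{2}\lambda}{(1+a|x_{i}^{*}|)^{3}},
\]
and solving this for $|x_{i}^{*}|$ produces a \emph{cube-root} lower bound, not the bound (\ref{equ20}). The paper does \emph{not} pass to the limit $\tau\to 0$ at this step: it instead takes $h=e_{i}$ in the pre-limit inequality (\ref{equ21}) and then chooses the \emph{finite} perturbation $\tau=-x_{i}^{*}$, so that the $i$-th coordinate is driven to $0$ and $P_{a}(x^{*}+\tau e_{i})-P_{a}(x^{*})=-\rho_{a}(x_{i}^{*})$ exactly. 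This yields the quadratic inequality
\[
\frac{1}{T}\|R_{i}\|_{2}^{2}+\eta\|A_{i}\|_{2}^{2}\;\geq\;\frac{\lambda a^{2}}{(1+a|x_{i}^{*}|)^{2}},
\]
from which (\ref{equ20}) follows by a square root. The missing idea in your proposal is precisely this finite-step choice $\tau=-x_{i}^{*}$; the infinitesimal second-order information encoded in (\ref{equ19}) alone is too weak to recover the stated bound.
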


\begin{proof}
(1) Let $\mathrm{supp}(h)\subseteq \mathrm{supp}(x^{*})$. Then, incorporating the equality (\ref{equ15}) into the inequality (\ref{equ17}) yields that, for all $\tau\in \mathbb{R}$,
$$\frac{\tau^{2}}{T}\|Rh\|_{2}^{2}+\eta \tau^{2}\|Ah\|_{2}^{2}-\lambda\sum_{i\in \mathrm{supp}(x^{*})}\frac{\tau ah_{i}\mathrm{sign}(x_{i}^{\ast})}{(1+a|x^{*}_{i}|)^{2}}+\lambda(P_{a}(x^{*}+\tau h)-P_{a}(x^{*}))\geq 0,$$
or equivalently
\begin{equation}\label{equ21}
\begin{array}{llll}
&&\displaystyle\frac{1}{T}\|Rh\|_{2}^{2}+\eta\|Ah\|_{2}^{2}\\
&&\geq\displaystyle\frac{\lambda}{\tau^{2}}\Big(\sum_{i\in \mathrm{supp}(x^{*})}\frac{\tau ah_{i}\mathrm{sign}(x_{i}^{\ast})}{(1+a|x^{*}_{i}|)^{2}}-(P_{a}(x^{*}+\tau h)-P_{a}(x^{*}))\Big).
\end{array}
\end{equation}
Hence, letting $\tau\rightarrow0$ on the right-hand of inequality above, we have the inequality (\ref{equ19}).

(2) If we replace $h$ in inequality (\ref{equ21}) with the base vector $e_{i}$ for every $i\in \mathrm{supp}(x^{\ast})$, then we have the component-wise inequality
$$\frac{1}{T}\|R_{:,i}\|_{2}^{2}+\eta\|A_{:,i}\|_{2}^{2}\geq\frac{\lambda}{\tau^{2}}\bigg(\frac{at\mathrm{sign}(x_{i}^{\ast})}{(1+a|x_{i}^{\ast}|)^{2}}-
\frac{a|x_{i}^{\ast}+\tau|}{1+a|x_{i}^{\ast}+\tau|}+\frac{a|x_{i}^{\ast}|}{1+a|x_{i}^{\ast}|}\bigg)$$
where $R_{:,i}$ and $A_{:,i}$ represent the $i$-th column of the matrix $R$ and $A$, respectively.
Particularly, above inequality is available for $\tau=-x_{i}^{*}$. So, we have
$$\frac{1}{T}\|R_{:,i}\|_{2}^{2}+\eta\|A_{:,i}\|_{2}^{2}\geq\frac{\lambda}{(x_{i}^{\ast})^{2}}\bigg(-\frac{a|x_{i}^{\ast}|}{(1+a|x_{i}^{\ast}|)^{2}}
+\frac{a|x_{i}^{\ast}|}{1+a|x_{i}^{\ast}|}\bigg)$$
It follows that
$$\frac{1}{T}\|R_{i}\|_{2}^{2}+\eta\|A_{i}\|_{2}^{2}\geq\frac{\lambda a^{2}}{(1+a|x^{*}_{i}|)^{2}}.$$
From the inequality above, the inequality (\ref{equ20}) immediately follows. This completes the proof.
\end{proof}

\begin{theorem}\label{the3}
Suppose that  $x^{*}$ is the optimal solution to the problem $(FP_{a,\lambda,\eta})$. If $\lambda>\frac{1}{T}\|\beta e\|_{2}^{2}+\eta\| b\|_{2}^{2}$, then
\begin{equation}\label{equ22}
\| x^{*}\|_{\infty}\leq \frac{\frac{1}{T}\|\beta e_{T}\|_{2}^{2}+\eta\| b\|_{2}^{2}}{a(\lambda-(\frac{1}{T}\|\beta e_{T}\|_{2}^{2}+\eta\| b\|_{2}^{2}))}.
\end{equation}
\end{theorem}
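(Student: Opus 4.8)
The plan is to compare the objective value at $x^{*}$ with its value at $x=0$ and then exploit the nonnegativity and monotonicity of the fraction function $\rho_{a}$. Write $F(x)=\frac{1}{T}\|Rx-\beta e_{T}\|_{2}^{2}+\lambda P_{a}(x)+\eta\|Ax-b\|_{2}^{2}$ for the objective of $(FP_{a,\lambda,\eta})$ and set $C:=\frac{1}{T}\|\beta e_{T}\|_{2}^{2}+\eta\|b\|_{2}^{2}$. Since $x^{*}$ is optimal and $P_{a}(0)=0$, the first step is the inequality $F(x^{*})\le F(0)=C$.

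Next, because each of the three summands in $F(x^{*})$ is nonnegative, we may discard the two quadratic terms to obtain $\lambda P_{a}(x^{*})\le F(x^{*})\le C$, hence $P_{a}(x^{*})\le C/\lambda$. Now $P_{a}(x^{*})=\sum_{i=1}^{n}\rho_{a}(x_{i}^{*})$ is a sum of nonnegative terms, so for every index $i$ we get $\rho_{a}(x_{i}^{*})=\dfrac{a|x_{i}^{*}|}{1+a|x_{i}^{*}|}\le C/\lambda$.

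The final step is to solve this scalar inequality for $|x_{i}^{*}|$. Put $u=a|x_{i}^{*}|\ge 0$; then $\dfrac{u}{1+u}\le C/\lambda$ rearranges to $u(\lambda-C)\le C$. Here the hypothesis $\lambda>\frac{1}{T}\|\beta e\|_{2}^{2}+\eta\|b\|_{2}^{2}=C$ enters: it guarantees $\lambda-C>0$, so we may divide to conclude $u\le \dfrac{C}{\lambda-C}$, i.e. $|x_{i}^{*}|\le \dfrac{C}{a(\lambda-C)}$. Taking the maximum over $i\in\{1,\dots,n\}$ gives $\|x^{*}\|_{\infty}\le \dfrac{C}{a(\lambda-C)}$, which is exactly \eqref{equ22}.

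There is no genuinely hard step here; the only point requiring care is that the rearrangement of $\frac{u}{1+u}\le C/\lambda$ into a finite upper bound on $u$ is valid precisely because $\lambda-C>0$ — if $\lambda\le C$ the estimate degenerates, which is why the threshold condition on $\lambda$ is imposed. I would also note in passing that this argument does not use either optimality condition from Theorems~\ref{the1} and \ref{the2}, only the global minimality of $x^{*}$ compared against the trivial feasible point $x=0$.
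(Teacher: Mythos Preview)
Your proof is correct and follows essentially the same approach as the paper: compare $F(x^{*})$ against $F(0)$, drop the nonnegative quadratic terms to obtain $\lambda P_{a}(x^{*})\le C$, and then invert the scalar inequality $\frac{a|x_i^{*}|}{1+a|x_i^{*}|}\le C/\lambda$ using $\lambda>C$. The only cosmetic difference is that the paper passes directly to $\frac{a\|x^{*}\|_{\infty}}{1+a\|x^{*}\|_{\infty}}\le C/\lambda$ (using monotonicity of $\rho_{a}$) rather than bounding each coordinate and then taking the maximum, but this is the same argument.
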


\begin{proof}
Let $x^{*}$ be the optimal solution to the problem $(FP_{a,\lambda,\eta})$. Then we have
$$f(x^{\ast})=\frac{1}{T}\|Rx^{\ast}-\beta e_{T}\|_{2}^{2}+\lambda P_{a}(x^{\ast})+\eta\|Ax^{\ast}-b\|_{2}^{2}\leq f(0)=\frac{1}{T}\|\beta e_{T}\|_{2}^{2}+\eta\| b\|_{2}^{2}.$$
Hence $\lambda P_{a}(x^{*})\leq \frac{1}{T}\|\beta e_{T}\|_{2}^{2}+\eta\| b\|_{2}^{2}$, which implies that
$$\frac{a\|x^{*}\|_{\infty}}{1+a\| x^{*}\|_{\infty}}\leq\frac{\frac{1}{T}\|\beta e_{T}\|_{2}^{2}+\eta\| b\|_{2}^{2}}{\lambda}.$$
If $\lambda>\frac{1}{T}\|\beta e_{T}\|_{2}^{2}+\eta\| b\|_{2}^{2}$, then
$$\| x^{*}\|_{\infty}\leq \frac{\frac{1}{T}\|\beta e_{T}\|_{2}^{2}+\eta\| b\|_{2}^{2}}{a[\lambda-(\frac{1}{T}\|\beta e_{T}\|_{2}^{2}+\eta\| b\|_{2}^{2})]}.$$
This completes the proof.
\end{proof}

\subsection{Large regularization parameter $\lambda$ leads to zero solution}\label{section3-2}
Before we embark to this discussion, we should declare that the results derived in this following discussion are worst-case ones, implying that the kind of guarantees we obtained are over-pessimistic for all possibilities.

\begin{lemma} \label{lem1}
Let $x^{\ast}$ of sparsity $r$ be the optimal solution of the problem $(FP_{a,\lambda,\eta})$, the matrix $\tilde{R}$ be the submatrix of $R$ corresponding to $\mathrm{supp}(x^{*})$ and the matrix $\tilde{A}$ be the submatrix of $A$ corresponding to $\mathrm{supp}(x^{*})$. Then the matrix $\frac{2}{T}\tilde{R}^{\top}\tilde{R}+2\eta \tilde{A}^{\top}\tilde{A}$ is positive definite.
\end{lemma}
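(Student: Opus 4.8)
The plan is to show that the quadratic form $v^{\top}\bigl(\tfrac{2}{T}\tilde{R}^{\top}\tilde{R}+2\eta\tilde{A}^{\top}\tilde{A}\bigr)v$ is strictly positive for every nonzero $v\in\mathbb{R}^{r}$, where $r=\|x^{*}\|_{0}$. Writing this form out, it equals $\tfrac{2}{T}\|\tilde{R}v\|_{2}^{2}+2\eta\|\tilde{A}v\|_{2}^{2}$, which is manifestly nonnegative since $\eta>0$; hence the only thing to rule out is that it vanishes, i.e. that there is a nonzero $v$ with $\tilde{R}v=0$ and $\tilde{A}v=0$ simultaneously. So the whole lemma reduces to showing that the columns of $\tilde{R}$ and $\tilde{A}$ indexed by $\mathrm{supp}(x^{*})$ have no common nontrivial kernel vector.

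The key step is to derive a contradiction from the existence of such a $v$, and the tool for this is the second-order optimality condition of Theorem \ref{the2}(1). Suppose $v\neq 0$ lies in $\ker\tilde{R}\cap\ker\tilde{A}$. Extend $v$ to a vector $h\in\mathbb{R}^{n}$ by putting the entries of $v$ on $\mathrm{supp}(x^{*})$ and zeros elsewhere, so that $\mathrm{supp}(h)\subseteq\mathrm{supp}(x^{*})$ and $Rh=\tilde{R}v=0$, $Ah=\tilde{A}v=0$. Plugging this $h$ into inequality (\ref{equ19}) gives
\[
0=\frac{1}{T}\|Rh\|_{2}^{2}+\eta\|Ah\|_{2}^{2}\geq\lambda\sum_{i\in\mathrm{supp}(x^{*})}\frac{2a^{2}h_{i}^{2}}{(1+a|x_{i}^{*}|)^{3}}.
\]
Since $\lambda>0$, $a>0$, and each denominator $(1+a|x_{i}^{*}|)^{3}$ is finite and positive, the right-hand side is a nonnegative combination of the $h_{i}^{2}$ with strictly positive coefficients; the inequality forces $h_{i}=0$ for every $i\in\mathrm{supp}(x^{*})$, hence $v=0$, contradicting $v\neq 0$. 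Therefore $\ker\tilde{R}\cap\ker\tilde{A}=\{0\}$, and the quadratic form is strictly positive on $\mathbb{R}^{r}\setminus\{0\}$, establishing positive definiteness.

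I expect the only delicate point to be the bookkeeping that connects $h$ restricted to $\mathrm{supp}(x^{*})$ with the submatrices $\tilde{R},\tilde{A}$ — namely that $Rh$ depends only on the columns of $R$ indexed by $\mathrm{supp}(h)$, so that $Rh=\tilde{R}v$ exactly when $h$ is the zero-extension of $v$. This is routine. One should also note that Theorem \ref{the2}(1) is applied with $\lambda$ arbitrary positive (no largeness or smallness hypothesis is needed here), and that the finiteness of $|x_{i}^{*}|$ for $i\in\mathrm{supp}(x^{*})$ is automatic. No other obstacle is anticipated; the argument is essentially a one-line consequence of the second-order condition once the kernel reformulation is in place.
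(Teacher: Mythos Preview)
Your proof is correct and rests on the same idea as the paper's: both exploit the second-order optimality condition at $x^{*}$, which in matrix form says that $\frac{2}{T}\tilde{R}^{\top}\tilde{R}+2\eta\tilde{A}^{\top}\tilde{A}$ dominates the strictly positive diagonal matrix $\mathrm{Diag}\bigl(2\lambda a^{2}/(1+a|x_{i}^{*}|)^{3}\bigr)$, and hence is positive definite. The only difference is presentational---you invoke Theorem~\ref{the2}(1) directly (and phrase the conclusion via the common kernel of $\tilde{R}$ and $\tilde{A}$), whereas the paper re-derives the same second-order condition by computing the Hessian of the objective restricted to $\mathrm{supp}(x^{*})$; your version is therefore slightly more economical.
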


\begin{proof}
Without loss of generality, we assume
$$x^{\ast}=(x_{1}^{\ast}, x_{2}^{\ast}, \cdots, x_{r}^{\ast}, 0,\cdots, 0)^{\top}.$$
Let $z^{\ast}=(x_{1}^{\ast}, x_{2}^{\ast}, \cdots, x_{r}^{\ast})^{\top}\in \mathbb{R}^{r}$, $\tilde{R}\in \mathbb{R}^{T\times r}$ be the sub-matrix of $R$, whose columns in matrix $R$ corresponding to $z^{\ast}$, and $\tilde{A}\in \mathbb{R}^{2\times r}$ be the sub-matrix of $A$, whose columns in matrix $A$ corresponding to $z^{\ast}$. Define a function $g: \mathbb{R}^{r}\mapsto \mathbb{R}$ by
\begin{equation}\label{equ23}
g(z^{\ast})=\frac{1}{T}\|\tilde{R}z^{\ast}-\beta e_{T}\|_{2}^{2}+\lambda P_{a}(z^{\ast})+\eta\|\tilde{A}z^{\ast}-b\|_{2}^{2}.
\end{equation}
We have
\begin{equation}\label{equ24}
\begin{array}{llll}
f(X^{\ast})&=&\displaystyle\frac{1}{T}\|Rx^{\ast}-\beta e_{T}\|_{2}^{2}+\lambda P_{a}(x^{\ast})+\eta\|Ax^{\ast}-b\|_{2}^{2}\\
&=&\displaystyle\frac{1}{T}\|\tilde{R}z^{\ast}-\beta e_{T}\|_{2}^{2}+\lambda P_{a}(z^{\ast})+\eta\|\tilde{A}z^{\ast}-b\|_{2}^{2}\\
&=&\mathrm{g}(z^{\ast}).
\end{array}
\end{equation}
Since function $g$ is continuously differentiable at $z^{\ast}$. Moreover, in a neighborhood of $z^{\ast}$,
\begin{equation}\label{equ25}
\begin{array}{llll}
g(z^{\ast})=f(x^{\ast})&\leq&\displaystyle\min_{x\in \mathbb{R}^{n}}\Big\{f(x)|x_{i}=0,i=r+1,r+2,\cdots,n\Big\}\\
&=&\displaystyle\min_{z\in \mathbb{R}^{r}}g(z),
\end{array}
\end{equation}
which implies that $z^{\ast}$ is a local minimizer of the function $g$. Hence, the second order necessary condition for
$$\min_{z\in \mathbb{R}^{r}}g(z)$$
holds at $z^{\ast}$.
The second order necessary condition at $z^{\ast}$ gives that the matrix
$$\frac{2}{T}\tilde{R}^{\top}\tilde{R}+2\eta \tilde{A}^{\top}\tilde{A}-\mathrm{Diag}\bigg(\frac{2\lambda a^{2}}{(a|z_{i}^{\ast}|+1)^{3}}\bigg), \ \ \ i=1, 2, \cdots, r$$
is positive semi-definite, and the matrix
$$M=\mathrm{Diag}\bigg(\frac{2\lambda a^{2}}{(a|z_{i}^{\ast}|+1)^{3}}\bigg)$$
is positive. Therefore, the matrix $\frac{2}{T}\tilde{R}^{\top}\tilde{R}+2\eta \tilde{A}^{\top}\tilde{A}$ must be positive definite. This completes the proof.
\end{proof}

Nextly, we shall show that the value of the regularization parameter $\lambda$ of the problem $(FP_{a,\lambda,\eta})$ can not be chosen too large.
\begin{theorem} \label{th7}
Let
\begin{eqnarray*}
\bar{\lambda}&=&\frac{1}{T}\|\beta e_{T}\|_{2}^{2}+\eta\|b\|_{2}^{2}+\frac{1}{a}\Big\|\frac{\beta}{m}R^{\top}e_{T}+\eta A^{\top}b\Big\|_{\infty}\\
&&+\frac{1}{a}\sqrt{\Big\|\frac{\beta}{T}R^{\top}e_{T}+\eta A^{\top}b\Big\|^{2}_{\infty}+2a\Big(\frac{1}{T}\|\beta e_{T}\|_{2}^{2}+\eta\|b\|_{2}^{2}\Big)\Big\|\frac{\beta}{T}R^{\top}e_{T}+\eta A^{\top}b\Big\|_{\infty}}.
\end{eqnarray*}
Then for all $\lambda\geq\bar{\lambda}$, the problem $(FP_{a,\lambda,\eta})$ admits the zero solution.
\end{theorem}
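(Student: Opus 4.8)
The plan is a proof by contradiction. I would assume $0$ is not an optimal solution of $(FP_{a,\lambda,\eta})$ and let $x^{\ast}$ be an optimal solution; then $x^{\ast}\neq 0$ and, writing $C:=\frac{1}{T}\|\beta e_{T}\|_{2}^{2}+\eta\|b\|_{2}^{2}$ (so that $f(0)=C$), the \emph{strict} inequality $f(x^{\ast})<f(0)=C$ holds. I would set $v:=\frac{\beta}{T}R^{\top}e_{T}+\eta A^{\top}b$ and $D:=\|v\|_{\infty}$; these are precisely the quantities appearing inside $\bar{\lambda}$, and an elementary computation shows that $\bar{\lambda}$ is exactly the larger root of the quadratic equation $a(\lambda-C)^{2}=2D\lambda$. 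Since $\lambda\mapsto a(\lambda-C)^{2}-2D\lambda$ is an upward parabola, $a(\lambda-C)^{2}<2D\lambda$ holds exactly strictly between its two roots; in particular, when $\lambda>C$ the inequality $a(\lambda-C)^{2}<2D\lambda$ forces $\lambda<\bar{\lambda}$. (If $D=0$ the theorem is trivial, since then $f(x)-f(0)=\frac{1}{T}\|Rx\|_{2}^{2}+\eta\|Ax\|_{2}^{2}+\lambda P_{a}(x)\geq 0$ and $\bar{\lambda}=C$; so assume $D>0$, whence $\bar{\lambda}>C$ and $\lambda>C$.) The strategy is then to trap $\|x^{\ast}\|_{\infty}$ between an upper and a lower estimate coming from $f(x^{\ast})<f(0)$ and thereby deduce $\lambda<\bar{\lambda}$.

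For the upper estimate: as the data-fidelity and penalty terms are nonnegative, $f(x^{\ast})<C$ gives $\lambda P_{a}(x^{\ast})<C$, hence $\rho_{a}(\|x^{\ast}\|_{\infty})\leq P_{a}(x^{\ast})<C/\lambda$, which rearranges (using $\lambda>C$) to $\|x^{\ast}\|_{\infty}<\frac{C}{a(\lambda-C)}$; this is exactly the bound of Theorem \ref{the3} in strict form and may simply be quoted from there.

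For the lower estimate: I would apply Theorem \ref{the1} with $h=x^{\ast}$ (admissible since $\mathrm{supp}(x^{\ast})\subseteq\mathrm{supp}(x^{\ast})$), obtaining $\frac{2}{T}\langle\beta e_{T}-Rx^{\ast},Rx^{\ast}\rangle+2\eta\langle b-Ax^{\ast},Ax^{\ast}\rangle=\lambda\sum_{i\in\mathrm{supp}(x^{\ast})}\frac{a|x_{i}^{\ast}|}{(1+a|x_{i}^{\ast}|)^{2}}$. Expanding the left-hand side rewrites it as $2\langle v,x^{\ast}\rangle-\frac{2}{T}\|Rx^{\ast}\|_{2}^{2}-2\eta\|Ax^{\ast}\|_{2}^{2}$; moreover $Rx^{\ast}$ and $Ax^{\ast}$ cannot both vanish, for otherwise $f(x^{\ast})=C+\lambda P_{a}(x^{\ast})\geq C$ against $f(x^{\ast})<f(0)$. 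Discarding the thus strictly positive quadratic part and using $\langle v,x^{\ast}\rangle\leq D\|x^{\ast}\|_{1}$ gives $\lambda\sum_{i\in\mathrm{supp}(x^{\ast})}\frac{a|x_{i}^{\ast}|}{(1+a|x_{i}^{\ast}|)^{2}}<2D\|x^{\ast}\|_{1}$, i.e.\ $\sum_{i\in\mathrm{supp}(x^{\ast})}|x_{i}^{\ast}|\Big(\frac{a\lambda}{(1+a|x_{i}^{\ast}|)^{2}}-2D\Big)<0$. Hence some $i_{0}\in\mathrm{supp}(x^{\ast})$ obeys $\frac{a\lambda}{(1+a|x_{i_{0}}^{\ast}|)^{2}}<2D$, equivalently $|x_{i_{0}}^{\ast}|>\frac{1}{a}\big(\sqrt{a\lambda/(2D)}-1\big)$, and therefore $\|x^{\ast}\|_{\infty}>\frac{1}{a}\big(\sqrt{a\lambda/(2D)}-1\big)$.

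Combining the two estimates yields $\sqrt{a\lambda/(2D)}-1<\frac{C}{\lambda-C}$, i.e.\ $\sqrt{a\lambda/(2D)}<\frac{\lambda}{\lambda-C}$; squaring (both sides positive) gives $a(\lambda-C)^{2}<2D\lambda$, which by the opening remark forces $\lambda<\bar{\lambda}$, contradicting $\lambda\geq\bar{\lambda}$. Hence $0$ is an optimal solution of $(FP_{a,\lambda,\eta})$ for every $\lambda\geq\bar{\lambda}$. The main obstacle is the lower-estimate step: one must notice that Theorem \ref{the1} evaluated at $h=x^{\ast}$, after being ``cleaned'' by dropping $\frac{2}{T}\|Rx^{\ast}\|_{2}^{2}+2\eta\|Ax^{\ast}\|_{2}^{2}$ --- which is simultaneously the slack that makes the bound usable and the source of the strictness that excludes the borderline case $\lambda=\bar{\lambda}$ --- reduces to a coordinatewise inequality whose solution for $|x_{i_{0}}^{\ast}|$ carries a square root; confronting that square root with the $\ell_{\infty}$-bound of Theorem \ref{the3} is what produces the quadratic $a(\lambda-C)^{2}=2D\lambda$, and hence exactly the threshold $\bar{\lambda}$.
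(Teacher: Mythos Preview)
Your proposal is correct and follows essentially the same route as the paper's proof: assume a nonzero optimal solution, evaluate the first-order optimality condition at $h=x^{\ast}$, discard the (strictly positive) quadratic term to obtain the strict inequality $\sum_{i}\big(\frac{\lambda a|x_{i}^{\ast}|}{(1+a|x_{i}^{\ast}|)^{2}}-2v_{i}x_{i}^{\ast}\big)<0$, and confront this with the upper bound on $\|x^{\ast}\|_{\infty}$ from Theorem~\ref{the3} to force $\lambda<\bar{\lambda}$. The only notable difference is that the paper secures the strict positivity of $\frac{2}{T}\|Rx^{\ast}\|_{2}^{2}+2\eta\|Ax^{\ast}\|_{2}^{2}$ by invoking Lemma~\ref{lem1} (positive definiteness of the restricted Hessian), whereas you obtain it more directly from $f(x^{\ast})<f(0)$; your argument is a mild simplification and also handles the boundary case $\lambda=\bar{\lambda}$ and the degenerate case $D=0$ more cleanly.
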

\begin{proof}
By the proof of Lemma \ref{lem1}, the first order necessary condition for
$$\min_{z\in \mathbb{R}^{r}}\mathrm{g}(z)$$
at $z^{\ast}$ gives
\begin{equation}\label{equ26}
\frac{2}{T}\tilde{R}^{\top}(\tilde{R}z^{\ast}-\beta e_{T})+2\eta \tilde{A}^{\top}(\tilde{A}z^{\ast}-b)
+\mathrm{Diag}(\mathrm{sign}(z^{\ast}))\frac{\lambda a}{(a|z^{\ast}|+1)^{2}}=0.
\end{equation}
Multiplying by $(z^{\ast})^{\top}$ both sides of equality above yield
\begin{eqnarray*}
&&(z^{\ast})^{\top}\Big(\frac{2}{T}\tilde{R}^{\top}\tilde{R}+2\eta \tilde{A}^{\top}\tilde{A}\Big)z^{\ast}
-(z^{\ast})^{\top}\Big(\frac{2\beta}{T}\tilde{R}^{\top}e_{T}+2\eta \tilde{A}^{\top}b\Big)\\
&&+(z^{\ast})^{\top}\mathrm{Diag}(\mathrm{sign}(z^{\ast}))\frac{\lambda a}{(a|z^{\ast}|+1)^{2}}=0.
\end{eqnarray*}
Because the matrix $\frac{2}{T}\tilde{R}^{\top}\tilde{R}+2\eta \tilde{A}^{\top}\tilde{A}$ is positive definite (see Lemma \ref{lem1}), and hence
$$-(z^{\ast})^{\top}\Big(\frac{2\beta}{T}\tilde{R}^{\top}e_{T}+2\eta \tilde{A}^{\top}b\Big)+(z^{\ast})^{\top}\mathrm{Diag}(\mathrm{sign}(z^{\ast}))\frac{\lambda a}{(a|z^{\ast}|+1)^{2}}<0,$$
equivalently,
\begin{equation}\label{equ27}
\sum_{i=1}^{r}\bigg(\frac{\lambda a|z_{i}^{\ast}|}{(a|z_{i}^{\ast}|+1)^{2}}-2\Big(\frac{\beta}{T}\tilde{R}^{\top}e_{T}+\eta \tilde{A}^{\top}b\Big)_{i}z_{i}^{\ast}\bigg)<0.
\end{equation}
Since
\begin{eqnarray*}
\lambda&>&\frac{1}{T}\|\beta e_{T}\|_{2}^{2}+\eta\|b\|_{2}^{2}+\frac{1}{a}\Big\|\frac{\beta}{T}R^{\top}e_{T}+\eta A^{\top}b\Big\|_{\infty}\\
&&+\frac{1}{a}\sqrt{\Big\|\frac{\beta}{T}R^{\top}e_{T}+\eta A^{\top}b\Big\|^{2}_{\infty}+2a\Big(\frac{1}{T}\|\beta e_{T}\|_{2}^{2}+\eta\|b\|_{2}^{2}\Big)\Big\|\frac{\beta}{T}R^{\top}e_{T}+\eta A^{\top}b\Big\|_{\infty}},
\end{eqnarray*}
we obtain
\begin{equation}\label{equ28}
\begin{array}{llll}
&&a\lambda^{2}-2\bigg[a\Big(\frac{1}{T}\|\beta e_{T}\|_{2}^{2}+\eta\|b\|_{2}^{2}\Big)+\Big\|\frac{\beta}{T}R^{\top}e_{T}+\eta A^{\top}b\Big\|_{\infty}\bigg]\lambda\\
&&+a\Big(\frac{1}{T}\|\beta e_{T}\|_{2}^{2}+\eta\|b\|_{2}^{2}\Big)^{2}\geq0,
\end{array}
\end{equation}
which implies that
\begin{equation}\label{equ29}
\frac{a[\lambda-(\frac{1}{T}\|\beta e_{T}\|_{2}^{2}+\eta\|b\|_{2}^{2})]^{2}}{\lambda}\geq2\Big\|\frac{\beta}{T}R^{\top}e_{T}+\eta A^{\top}b\Big\|_{\infty}.
\end{equation}
Together with
$$\frac{\lambda a}{(a|z_{i}^{\ast}|+1)^{2}}\geq\frac{a[\lambda-(\frac{1}{T}\|\beta e_{T}\|_{2}^{2}+\eta\|b\|_{2}^{2})]^{2}}{\lambda}$$
and
\begin{equation}\label{equ30}
\Big|\Big(\frac{\beta}{T}\tilde{R}^{\top}e_{T}+\eta \tilde{A}^{\top}b\Big)_{i}\Big|\leq\Big\|\frac{\beta}{T}R^{\top}e_{T}+\eta A^{\top}b\Big\|_{\infty},
\end{equation}
we obtain that
\begin{equation}\label{equ31}
\frac{\lambda a}{(a|z_{i}^{\ast}|+1)^{2}}-2\Big|\Big(\frac{\beta}{T}\tilde{R}^{\top}e_{T}+\eta \tilde{A}^{\top}b\Big)_{i}\Big|\geq0.
\end{equation}
Hence, for any $i\in \{1, 2, \cdots, r\}$,
$$\frac{\lambda a|z_{i}^{\ast}|}{(a|z_{i}^{\ast}|+1)^{2}}-2\Big(\frac{\beta}{T}\tilde{R}^{\top}e_{T}+\eta \tilde{A}^{\top}b\Big)_{i}z_{i}^{\ast}\geq0,$$
which is a contradiction with (\ref{equ27}), as claimed. This completes the proof.
\end{proof}

\subsection{Problem $(FP_{a,\lambda,\eta})$ solves problem $(FP_{a,\lambda})$ for any $\eta\rightarrow \infty$}\label{section3-3}
Let
\begin{equation}\label{equ32}
C_{\lambda}(x)_{(Ax=b)}=\frac{1}{T}\|Rx-\beta e_{T}\|_{2}^{2}+\lambda P_{a}(x)
\end{equation}
and
\begin{equation}\label{equ33}
C_{\lambda,\eta}(x)=\frac{1}{T}\|Rx-\beta e_{T}\|_{2}^{2}+\lambda P_{a}(x)+\eta\|Ax-b\|_{2}^{2}.
\end{equation}

\begin{theorem} \label{the5}
Suppose $x^{[\lambda]}$ is the unique minimizer of the problem $(FP_{a,\lambda})$. Then, for each fixed $\eta>0$, the minimizer $x^{[\lambda,\eta]}$ of the problem $(FP_{a,\lambda,\eta})$
converges to $x^{[\lambda]}$ as $\eta\rightarrow+\infty$.
\end{theorem}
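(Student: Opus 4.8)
The plan is to run the classical quadratic-penalty convergence argument and then use the uniqueness of $x^{[\lambda]}$ to upgrade subsequential convergence into convergence of the whole family. Write $M:=C_{\lambda}(x^{[\lambda]})_{(Ax=b)}=\frac{1}{T}\|Rx^{[\lambda]}-\beta e_{T}\|_{2}^{2}+\lambda P_{a}(x^{[\lambda]})$. Since $x^{[\lambda]}$ is feasible for $(FP_{a,\lambda})$, i.e. $Ax^{[\lambda]}=b$, the penalty term vanishes there, so optimality of $x^{[\lambda,\eta]}$ for $(FP_{a,\lambda,\eta})$ gives $C_{\lambda,\eta}(x^{[\lambda,\eta]})\le C_{\lambda,\eta}(x^{[\lambda]})=M$ for every $\eta>0$. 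Reading off the three nonnegative summands of $C_{\lambda,\eta}(x^{[\lambda,\eta]})$ separately yields the two basic estimates: $\eta\|Ax^{[\lambda,\eta]}-b\|_{2}^{2}\le M$, hence $\|Ax^{[\lambda,\eta]}-b\|_{2}^{2}\le M/\eta\to0$ as $\eta\to+\infty$; and $\frac{1}{T}\|Rx^{[\lambda,\eta]}-\beta e_{T}\|_{2}^{2}+\lambda P_{a}(x^{[\lambda,\eta]})\le M$, uniformly in $\eta$. (The same chain works for any selection of minimizer, so no uniqueness of $x^{[\lambda,\eta]}$ is needed.)

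Next I would show the family $\{x^{[\lambda,\eta]}\}$ is bounded, so that along any sequence $\eta_{k}\to+\infty$ a subsequence converges, say $x^{[\lambda,\eta_{k}]}\to\bar{x}$; boundedness is the delicate point, discussed below. Granting it: passing to the limit in $\|Ax^{[\lambda,\eta_{k}]}-b\|_{2}\to0$ and using continuity of $x\mapsto Ax$ gives $A\bar{x}=b$, so $\bar{x}$ is feasible for $(FP_{a,\lambda})$. Dropping the nonnegative penalty term in $C_{\lambda,\eta_{k}}(x^{[\lambda,\eta_{k}]})\le M$ gives $\frac{1}{T}\|Rx^{[\lambda,\eta_{k}]}-\beta e_{T}\|_{2}^{2}+\lambda P_{a}(x^{[\lambda,\eta_{k}]})\le M$; since $P_{a}$ and $x\mapsto\frac1T\|Rx-\beta e_{T}\|_{2}^{2}$ are continuous, letting $k\to\infty$ yields $\frac{1}{T}\|R\bar{x}-\beta e_{T}\|_{2}^{2}+\lambda P_{a}(\bar{x})\le M=C_{\lambda}(x^{[\lambda]})_{(Ax=b)}$. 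But $x^{[\lambda]}$ minimizes $C_{\lambda}(\cdot)_{(Ax=b)}$ over the feasible set and $\bar{x}$ is feasible, so the reverse inequality holds as well; hence $\bar{x}$ is a minimizer of $(FP_{a,\lambda})$, and by the assumed uniqueness $\bar{x}=x^{[\lambda]}$. Therefore every subsequential limit of every sequence $\{x^{[\lambda,\eta_{k}]}\}$, $\eta_{k}\to+\infty$, equals $x^{[\lambda]}$; together with boundedness this forces $x^{[\lambda,\eta]}\to x^{[\lambda]}$ (otherwise some sequence stays outside a fixed open ball around $x^{[\lambda]}$ while remaining bounded, hence has a convergent subsequence with limit $\neq x^{[\lambda]}$, a contradiction).

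The \emph{main obstacle} is the boundedness of $\{x^{[\lambda,\eta]}\}$: because $P_{a}$ is bounded above by $n$ and $\frac1T\|Rx-\beta e_{T}\|_{2}^{2}$ need not be coercive when $R$ is rank-deficient, the estimates above control only $Rx^{[\lambda,\eta]}$ and $Ax^{[\lambda,\eta]}$, not $x^{[\lambda,\eta]}$ itself. I would close this gap using a rank/structure hypothesis that is in any case implicit in ``$x^{[\lambda]}$ is the unique minimizer'': if the stacked matrix $(R^{\top},A^{\top})^{\top}\in\mathbb{R}^{(T+2)\times n}$ has trivial null space, then $\|x\|_{2}\le c\big(\|Rx-\beta e_{T}\|_{2}+\|Ax-b\|_{2}\big)+c'$ for constants $c,c'$, and the two estimates give a uniform bound; alternatively one argues on the quotient by the common null space of $R$ and $A$ and notes that a direction along which $C_{\lambda,\eta}$ fails to grow would, because $P_{a}$ is bounded, allow feasible perturbations of $x^{[\lambda]}$ without increasing the objective of $(FP_{a,\lambda})$, contradicting uniqueness. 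A secondary, routine point is the mere existence of the minimizers $x^{[\lambda,\eta]}$; since Lemma \ref{lem1} and the surrounding development already presuppose it, I would simply invoke it rather than re-prove it.
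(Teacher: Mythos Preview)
Your argument is essentially the paper's: compare $C_{\lambda,\eta}(x^{[\lambda,\eta]})$ against $C_{\lambda,\eta}(x^{[\lambda]})=M$, read off $\|Ax^{[\lambda,\eta]}-b\|_{2}^{2}\le M/\eta\to0$ and a uniform bound on the unpenalized part, pass to accumulation points, and invoke uniqueness of $x^{[\lambda]}$ to conclude full convergence. On the boundedness issue you flag as the ``main obstacle'': the paper does not resolve it either---it infers the existence of accumulation points solely from the uniform boundedness of $P_{a}(x^{[\lambda,\eta]})$, which is vacuous since $P_{a}\le n$ identically---so your treatment (invoking a rank condition on the stacked matrix $(R^{\top},A^{\top})^{\top}$ or leveraging the uniqueness hypothesis) is in fact more careful than the published proof on this point.
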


\begin{proof}
Since $x^{[\lambda,\eta]}$ minimizes $C_{\lambda,\eta}$, it follows that
$$C_{\lambda,\eta}(x^{[\lambda,\eta]})\leq C_{\lambda,\eta}(x^{[\lambda]})=C_{\lambda}(x^{[\lambda]})_{(Ax^{[\lambda]}=b)}.$$
Consequently
$$\lambda P_{a}(x^{[\lambda,\eta]})\leq C_{\lambda,\eta}(x^{[\lambda,\eta]})\leq C_{\lambda}(x^{[\lambda]})_{(Ax^{[\lambda]}=b)},$$
\end{proof}
so that $P_{a}(x^{[\lambda,\eta]})$ is bounded, uniformly in $\eta$. This implies that the set $\{x^{[\lambda,\eta]}: \eta>0\}$ must have accumulation points that can be written as
$$\tilde{x}=\lim_{n\rightarrow\infty}x^{[\lambda,\eta_{n}]},$$
where $\eta_{n}\rightarrow +\infty$ as $n\rightarrow+\infty$.

On the other hand, we also have
$$\|Ax^{[\lambda,\eta]}-b\|_{2}^{2}\leq\frac{1}{\eta} C_{\lambda,\eta}(x^{[\lambda,\eta]})\leq \frac{1}{\eta}C_{\lambda}(x^{[\lambda]})_{(Ax^{[\lambda]}=b)}\rightarrow 0\ \ \mathrm{as} \ \ \eta\rightarrow+\infty.$$
This implies that any accumulation point $\tilde{x}$, of the type described above, satisfies $A\tilde{x}=b$. It follows that
\begin{eqnarray*}
C_{\lambda}(\tilde{x})_{(A\tilde{x}=b)}&=&\lim_{n\rightarrow+\infty}C_{\lambda}(x^{[\lambda,\eta_{n}]})_{(\|Ax^{[\lambda,\eta_{n}]}-b\|_{2}^{2}\leq\frac{1}{\eta_{n}}C_{\lambda}(x^{[\lambda]})_{(Ax^{[\lambda]}=b)})}\\
&\leq&\lim_{n\rightarrow+\infty}C_{\lambda,\eta_{n}}(x^{[\lambda,\eta_{n}]})\\
&\leq&\lim_{n\rightarrow+\infty}C_{\lambda}(x^{[\lambda]})_{(Ax^{[\lambda]}=b)}\\
&=&C_{\lambda}(x^{[\lambda]})_{(Ax^{[\lambda]}=b)}.
\end{eqnarray*}
Since $\tilde{x}$ is any accumulation point of $\{x^{[\lambda,\eta]}: \eta>0\}$ for $\eta\rightarrow+\infty$, and $x^{[\lambda]}$ is the unique minimizer of $C_{\lambda}(x)_{(Ax=b)}$, it follows that
$\tilde{x}=x^{[\lambda]}$. Since this is true for an arbitrary accumulation point of $\{x^{[\lambda,\eta]}: \eta>0\}$ the type described above, it follows that
$$\lim_{\eta\rightarrow+\infty}x^{[\lambda,\eta]}=x^{[\lambda]}.$$
This completes the proof.

\section{Algorithms for solving problems $(FP_{a,\lambda,\eta})$ and $(FP^{\geq}_{a,\lambda,\eta})$}\label{section4}
In this section, we develop the thresholding representation theories of the problems $(FP_{a,\lambda,\eta})$ and $(FP^{\geq}_{a,\lambda,\eta})$. Based on them, we propose the IFPT algorithm and the INFPT algorithm to solve the problems $(FP_{a,\lambda,\eta})$ and $(FP^{\geq}_{a,\lambda,\eta})$ for all $a>0$, respectively.

\subsection{IFPT algorithm for solving the problem $(FP_{a,\lambda,\eta})$}\label{subsection4-1}
In this subsection, the IFPT algorithm is proposed to solve the problem $(FP_{a,\lambda,\eta})$ for all $a>0$. Before we embark to this discussion, some crucial results need to be introduced for our later use.

Define a function of $\beta\in \mathbb{R}$ as
\begin{equation}\label{equequ34}
f_{\lambda}(\beta)=(\beta-\gamma)^{2}+\lambda\rho_{a}(\beta),
\end{equation}
and
\begin{equation}\label{equ35}
\mathrm{prox}_{\lambda}^{\rho_{a}}(\gamma)\triangleq\arg\min_{\beta\in \mathbb{R}}f_{\lambda}(\beta).
\end{equation}

\begin{lemma}\label{lem2}{\rm(see \cite{li15,xing16})}
The operator $\mathrm{prox}_{\lambda}^{\rho_{a}}$ defined in (\ref{equ35}) can be expressed as
\begin{equation}\label{equ36}
\mathrm{prox}_{\lambda}^{\rho_{a}}(\gamma)=\left\{
    \begin{array}{ll}
      g_{a,\lambda}(\gamma), & \ \ \mathrm{if} \ {|\gamma|> t_{a,\lambda}^{\ast};} \\
      0, & \ \ \mathrm{if} \ {|\gamma|\leq t_{a,\lambda}^{\ast}.}
    \end{array}
  \right.
\end{equation}
where $g_{a,\lambda}$ is defined as
\begin{equation}\label{equ37}
g_{a,\lambda}(\gamma)=sign(\gamma)\bigg(\frac{\frac{1+a|\gamma|}{3}(1+2\cos(\frac{\phi(\gamma)}{3}-\frac{\pi}{3}))-1}{a}\bigg),
\end{equation}
$$\phi(\gamma)=\arccos\Big(\frac{27\lambda a^{2}}{4(1+a|\gamma|)^{3}}-1\Big),$$
and the threshold value satisfies
\begin{equation}\label{equ38}
t_{a,\lambda}^{\ast}=\left\{
    \begin{array}{ll}
      t_{a,\lambda}^{1}, & \ \ \mathrm{if} \ {\lambda\leq \frac{1}{a^{2}};} \\
      t_{a,\lambda}^{2}, & \ \ \mathrm{if} \ {\lambda>\frac{1}{a^{2}}.}
    \end{array}
  \right.
\end{equation}
where
\begin{equation}\label{equ39}
t_{a,\lambda}^{1}=\frac{\lambda}{2}a, \ \ \ \ t_{a,\lambda}^{2}=\sqrt{\lambda}-\frac{1}{2a}.
\end{equation}
\end{lemma}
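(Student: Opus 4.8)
The statement is the scalar proximity formula for the fraction penalty, and the plan is to carry out the one-dimensional minimization in (\ref{equ35}) explicitly. Since $\rho_{a}$ is even, for $\gamma\geq 0$ and any $\beta\geq 0$ one has $f_{\lambda}(-\beta)=(\beta+\gamma)^{2}+\lambda\rho_{a}(\beta)\geq f_{\lambda}(\beta)$, so a global minimizer of $f_{\lambda}$ may be sought in $[0,+\infty)$; the same symmetry reduces the general case to $\mathrm{prox}_{\lambda}^{\rho_{a}}(\gamma)=\mathrm{sign}(\gamma)\,\mathrm{prox}_{\lambda}^{\rho_{a}}(|\gamma|)$, so I take $\gamma\geq 0$ from now on. On $(0,+\infty)$ the map $\rho_{a}(\beta)=a\beta/(a\beta+1)$ is smooth with $\rho_{a}'(\beta)=a/(a\beta+1)^{2}$, hence every interior stationary point of $f_{\lambda}$ solves $2(\beta-\gamma)+\lambda a/(a\beta+1)^{2}=0$. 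Clearing denominators and setting $u=a\beta+1>1$ turns this into the cubic $u^{3}-(1+a\gamma)u^{2}+\frac{1}{2}\lambda a^{2}=0$.

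I would then solve this cubic by the trigonometric (Vi\`ete) method. The shift $u=v+\frac{1+a\gamma}{3}$ removes the quadratic term and produces $v^{3}+pv+q=0$ with $p=-\frac{(1+a\gamma)^{2}}{3}$ and $q=\frac{\lambda a^{2}}{2}-\frac{2(1+a\gamma)^{3}}{27}$; a short computation gives $\frac{3q}{2p}\sqrt{-3/p}=1-\frac{27\lambda a^{2}}{4(1+a\gamma)^{3}}$, which lies in $[-1,1]$ exactly when $(1+a\gamma)^{3}\geq\frac{27}{8}\lambda a^{2}$, precisely the regime in which $\phi(\gamma)=\arccos\left(\frac{27\lambda a^{2}}{4(1+a|\gamma|)^{3}}-1\right)$ is defined. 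Using $\arccos(-x)=\pi-\arccos(x)$, the three real roots are $v_{k}=\frac{2(1+a\gamma)}{3}\cos\left(\frac{\pi-\phi}{3}-\frac{2\pi k}{3}\right)$, $k=0,1,2$; the choice $k=0$ gives the largest root, and passing back to $u_{0}=v_{0}+\frac{1+a\gamma}{3}$ and then to $\beta=(u_{0}-1)/a$ (restoring the sign) reproduces exactly the expression (\ref{equ37}) for $g_{a,\lambda}$. The reason the largest root is the relevant one is that it lies on the convex branch of $f_{\lambda}$, hence is a local minimizer, whereas the middle root, when distinct, is a local maximizer.

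It remains to compare this non-zero candidate against the boundary value $f_{\lambda}(0)=\gamma^{2}$, and the crossover produces the threshold $t_{a,\lambda}^{\ast}$ of (\ref{equ38})--(\ref{equ39}). I would split according to the sign of $f_{\lambda}''(0^{+})=2(1-\lambda a^{2})$. If $\lambda\leq 1/a^{2}$, then $f_{\lambda}''$ is nonnegative and nondecreasing on $[0,+\infty)$, so $f_{\lambda}$ is convex there and its minimizer over $[0,+\infty)$ is interior if and only if $f_{\lambda}'(0^{+})=-2\gamma+\lambda a<0$, i.e. if and only if $\gamma>t_{a,\lambda}^{1}=\frac{\lambda a}{2}$. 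If $\lambda>1/a^{2}$, then $f_{\lambda}$ is concave on $[0,\beta_{0}]$ and convex on $[\beta_{0},+\infty)$ with $(a\beta_{0}+1)^{3}=\lambda a^{2}$, so the only competitor of $\beta=0$ is the larger root, and imposing simultaneously $f_{\lambda}(\beta)=\gamma^{2}$ and $f_{\lambda}'(\beta)=0$ forces $(a\beta+1)^{2}=\lambda a^{2}$ and then $2\gamma=\beta+\lambda a/(a\beta+1)=2\sqrt{\lambda}-\frac{1}{a}$, i.e. $\gamma=t_{a,\lambda}^{2}=\sqrt{\lambda}-\frac{1}{2a}$; a monotonicity check in $\gamma$ along the convex branch then shows that $\beta=0$ is optimal for $|\gamma|\leq t_{a,\lambda}^{2}$ and the larger root for $|\gamma|>t_{a,\lambda}^{2}$.

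The main obstacle is the bookkeeping in the non-convex regime $\lambda>1/a^{2}$: one has to check that among the (up to) three stationary points the trigonometric formula singles out the correct one for every $\gamma$ in the relevant range, and that the value comparison $f_{\lambda}(g_{a,\lambda}(\gamma))$ against $\gamma^{2}$ is genuinely governed by the single threshold $t_{a,\lambda}^{2}$ and not by a more intricate set, which requires a careful monotonicity analysis of $f_{\lambda}$ restricted to its convex branch as $\gamma$ varies. By contrast the purely algebraic steps --- deriving the cubic, applying Vi\`ete's formulas, simplifying the $\arccos$ argument, and verifying that the threshold value makes $\phi$ well defined --- are routine, and the claim is in any case recorded in \cite{li15,xing16}.
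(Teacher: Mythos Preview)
The paper does not supply its own proof of this lemma: it is quoted verbatim from \cite{li15,xing16} and used as a black box, so there is no argument in the text to compare against. Your proposal is a correct and self-contained derivation. The reduction by evenness, the substitution $u=a\beta+1$ leading to the cubic $u^{3}-(1+a\gamma)u^{2}+\tfrac{1}{2}\lambda a^{2}=0$, and the trigonometric solution all check out; in particular the identity $\cos\!\big(\tfrac{\pi-\phi}{3}\big)=\cos\!\big(\tfrac{\phi}{3}-\tfrac{\pi}{3}\big)$ reconciles the Vi\`ete form with (\ref{equ37}). Your threshold analysis is also right: in the convex regime $\lambda\le 1/a^{2}$ the sign of $f_{\lambda}'(0^{+})=\lambda a-2\gamma$ decides, and in the non-convex regime the simultaneous equations $f_{\lambda}(\beta)=\gamma^{2}$, $f_{\lambda}'(\beta)=0$ force $(a\beta+1)^{2}=\lambda a^{2}$ and hence $\gamma=\sqrt{\lambda}-\tfrac{1}{2a}$, exactly as you compute. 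One small clarification worth adding when you write it up: the cubic always has a root $u\le 0$ (its value at $u=0$ is $\tfrac{1}{2}\lambda a^{2}>0$ and it tends to $-\infty$ as $u\to-\infty$), so at most two of the three trigonometric roots correspond to genuine stationary points $\beta>0$; this is consistent with your observation that $f_{\lambda}'$ is U-shaped and justifies singling out the largest root as the unique local minimizer on the convex branch.
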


\begin{definition}\label{de1}
Let $x\in \mathbb{R}^{n}$, the iterative thresholding operator $G_{\lambda, P_{a}}$ can be defined as
$$\mathcal{G}_{\lambda, P_{a}}(x)=\Big(\mathrm{prox}_{\lambda}^{\rho_{a}}(x_{1}),\mathrm{prox}_{\lambda}^{\rho_{a}}(x_{2}),\cdots,\mathrm{prox}_{\lambda}^{\rho_{a}}(x_{n})\Big)^{\top}$$
where $\mathrm{prox}_{\lambda}^{\rho_{a}}$ is defined in Lemma \ref{lem2}.
\end{definition}

The iterative thresholding operator $\mathcal{G}_{\lambda, P_{a}}$ simply applies the operator $\mathrm{prox}_{\lambda}^{\rho_{a}}$ defined in Lemma \ref{lem2} to a vector,
and effectively shrink them towards zero. It is clear that if many of the entries of the vector $x$ are below the threshold value $t_{a,\lambda}^{\ast}$, the sparsity  of $\mathcal{G}_{\lambda, P_{a}}(x)$ may be considerably lower than the sparsity of vector $x$.

Nextly, we shall show that the optimal solution to the problem $(FP_{a,\lambda,\eta})$ can also be expressed a thresholding operation.

For any fixed positive parameters $\lambda>0$, $\varphi>0$, $\eta>0$, $a>0$ and $x,z\in \mathbb{R}^{n}$, let
\begin{equation}\label{equ40}
C_{\lambda,\eta,\varphi}(x,z)=\varphi[C_{\lambda,\eta}(x)-\frac{1}{T}\|Rx-Rz\|_{2}^{2}-\eta\|Ax-Az\|_{2}^{2}]+\|x-z\|_{2}^{2}
\end{equation}
be the surrogate function of the function $C_{\lambda,\eta}(x)$ defined in (\ref{equ33}). Clearly, $C_{\lambda,\eta,\varphi}(x,x)=\varphi C_{\lambda,\eta}(x)$.

\begin{theorem}\label{the6}
For any fixed $\lambda>0$, $\varphi>0$, $\eta>0$, $a>0$ and $z\in \mathbb{R}^{n}$, $\displaystyle\min_{x\in \mathbb{R}^{n}}C_{\lambda,\eta,\varphi}(x,z)$
equivalents to
$$\min_{x\in \mathbb{R}^{n}}\Big\{\|x-B_{\varphi}(z)\|_{F}^{2}+\lambda\varphi P_{a}(x)\Big\}$$
where $B_{\varphi}(z)=z+\frac{\varphi}{T}R^{T}(\beta e_{T}-Rz)+\varphi\eta A^{T}(b-Az)$.
\end{theorem}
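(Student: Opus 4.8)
The plan is to expand the surrogate function $C_{\lambda,\eta,\varphi}(x,z)$ defined in \eqref{equ40}, separate the terms that depend on $x$ from those that do not, and recognize the $x$-dependent part as a completed square plus the penalty $\lambda\varphi P_a(x)$. First I would substitute the definition $C_{\lambda,\eta}(x)=\frac{1}{T}\|Rx-\beta e_T\|_2^2+\lambda P_a(x)+\eta\|Ax-b\|_2^2$ into \eqref{equ40}, so that
$$C_{\lambda,\eta,\varphi}(x,z)=\varphi\Big[\tfrac{1}{T}\|Rx-\beta e_T\|_2^2-\tfrac{1}{T}\|Rx-Rz\|_2^2+\eta\|Ax-b\|_2^2-\eta\|Ax-Az\|_2^2\Big]+\lambda\varphi P_a(x)+\|x-z\|_2^2.$$
The key observation is that the troublesome quadratic-in-$x$ terms $\frac{\varphi}{T}\|Rx\|_2^2$ and $\varphi\eta\|Ax\|_2^2$ appearing in the first two (resp.\ last two) bracketed pairs cancel, since $\|Rx-\beta e_T\|_2^2-\|Rx-Rz\|_2^2 = -2\langle Rx,\beta e_T-Rz\rangle + \|\beta e_T\|_2^2-\|Rz\|_2^2$ is affine in $x$, and similarly for the $A$-terms. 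This cancellation is precisely the purpose of the surrogate construction, and it is the one step where I would be careful with signs.

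Next I would collect everything: the only genuinely quadratic-in-$x$ term left is $\|x-z\|_2^2=\|x\|_2^2-2\langle x,z\rangle+\|z\|_2^2$, and the linear-in-$x$ contributions are $\frac{2\varphi}{T}\langle x,R^T(\beta e_T-Rz)\rangle$ and $2\varphi\eta\langle x,A^T(b-Az)\rangle$ (with appropriate signs after moving the minus through). Grouping the $\|x\|_2^2$ and all linear terms, I would complete the square to get $\|x\|_2^2-2\langle x,\,z+\frac{\varphi}{T}R^T(\beta e_T-Rz)+\varphi\eta A^T(b-Az)\rangle + (\text{const}) = \|x-B_\varphi(z)\|_2^2 + (\text{const})$, where $B_\varphi(z)=z+\frac{\varphi}{T}R^T(\beta e_T-Rz)+\varphi\eta A^T(b-Az)$ exactly as stated. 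All remaining terms ($\|z\|_2^2$, $\|\beta e_T\|_2^2$, $\|b\|_2^2$, etc.) are constants independent of $x$, hence irrelevant to the minimization.

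Therefore $C_{\lambda,\eta,\varphi}(x,z)=\|x-B_\varphi(z)\|_2^2+\lambda\varphi P_a(x)+c(z)$ for a constant $c(z)$ not depending on $x$, and since adding a constant does not change the set of minimizers, $\min_x C_{\lambda,\eta,\varphi}(x,z)$ is equivalent to $\min_x\{\|x-B_\varphi(z)\|_F^2+\lambda\varphi P_a(x)\}$. I expect the main obstacle to be purely bookkeeping: tracking the factor $\varphi$ through every term and verifying that the quadratic forms $\|Rx\|_2^2$ and $\|Ax\|_2^2$ cancel exactly (so no leftover $\varphi x^\top(\frac1T R^TR+\eta A^TA)x$ term survives to spoil the completed square); once that cancellation is confirmed, completing the square is routine. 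No deeper structural difficulty is anticipated, and no earlier result is actually needed — this is a self-contained algebraic identity.
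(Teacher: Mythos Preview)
Your proposal is correct and follows essentially the same approach as the paper: both arguments expand $C_{\lambda,\eta,\varphi}(x,z)$, use the cancellation of the $\|Rx\|_2^2$ and $\|Ax\|_2^2$ terms built into the surrogate, and complete the square to isolate $\|x-B_\varphi(z)\|_2^2+\lambda\varphi P_a(x)$ plus $x$-independent constants. The paper simply writes down the rearranged form in one step, whereas you spell out the cancellation and the completion of the square explicitly; there is no substantive difference.
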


\begin{proof}
By the definition, $C_{\lambda,\eta,\varphi}(x,z)$ can be rewritten as
\begin{eqnarray*}
C_{\lambda,\eta,\varphi}(x,z)&=&\|x-(z+\frac{\varphi}{T}R^{T}(\beta e_{T}-Rz)+\varphi\eta A^{T}(b-Az))\|_{2}^{2}+\lambda\varphi P_{a}(x)\\
&&+\|z\|_{2}^{2}-\|z+\frac{\varphi}{T}R^{T}(\beta e_{T}-Rz)+\varphi\eta A^{T}(b-Az)\|_{2}^{2}+\frac{\varphi}{T}\|\beta e_{T}\|_{2}^{2}\\
&&-\frac{\varphi}{T}\|Rz\|_{2}^{2}+\varphi\eta\|b\|_{2}^{2}-\varphi\eta\|Az\|_{2}^{2}\\
&=&\|x-B_{\varphi}(z)\|_{2}^{2}+\lambda\varphi P_{a}(x)+\|z\|_{2}^{2}-\|B_{\varphi}(z)\|_{2}^{2}+\frac{\varphi}{T}\|\beta e_{T}\|_{2}^{2}\\
&&-\frac{\varphi}{T}\|Rz\|_{2}^{2}+\varphi\eta\|b\|_{2}^{2}-\varphi\eta\|Az\|_{2}^{2}
\end{eqnarray*}
which implies that $\displaystyle\min_{x\in \mathbb{R}^{n}}C_{\lambda,\eta,\varphi}(x,z)$ for any fixed $\lambda>0$, $\varphi>0$, $\eta>0$, $a>0$ and $z\in \mathbb{R}^{n}$ equivalents to
$$\min_{x\in \mathbb{R}^{n}}\Big\{\|x-B_{\varphi}(z)\|_{2}^{2}+\lambda\varphi P_{a}(x)\Big\}.$$
This completes the proof.
\end{proof}

\begin{theorem}\label{the7}
For any fixed $\lambda>0$ and $0<\varphi<\frac{1}{\frac{1}{T}\|R\|_{2}^{2}+\eta\|A\|_{2}^{2}}$. If $x^{\ast}$ is the optimal solution of $\displaystyle\min_{x\in \mathbb{R}^{n}}C_{\lambda,\eta}(x)$, then $x^{\ast}$ is also the optimal solution of $\displaystyle\min_{x\in \mathbb{R}^{n}}C_{\lambda,\eta,\varphi}(x,x^{\ast})$, that is
$$C_{\lambda,\eta,\varphi}(x^{\ast},x^{\ast})\leq C_{\lambda,\eta,\varphi}(x,x^{\ast}).$$
\end{theorem}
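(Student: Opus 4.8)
The plan is to substitute $z = x^{\ast}$ directly into the definition \eqref{equ40} of the surrogate and reduce the claimed inequality to a single spectral estimate. Write $h = x - x^{\ast}$. Then by definition
\[
C_{\lambda,\eta,\varphi}(x,x^{\ast}) = \varphi\, C_{\lambda,\eta}(x) - \frac{\varphi}{T}\|Rh\|_{2}^{2} - \varphi\eta\|Ah\|_{2}^{2} + \|h\|_{2}^{2},
\]
while $C_{\lambda,\eta,\varphi}(x^{\ast},x^{\ast}) = \varphi\, C_{\lambda,\eta}(x^{\ast})$. Subtracting, the desired inequality $C_{\lambda,\eta,\varphi}(x^{\ast},x^{\ast}) \le C_{\lambda,\eta,\varphi}(x,x^{\ast})$ becomes
\[
\varphi\big(C_{\lambda,\eta}(x) - C_{\lambda,\eta}(x^{\ast})\big) + \Big(\|h\|_{2}^{2} - \varphi\big(\tfrac{1}{T}\|Rh\|_{2}^{2} + \eta\|Ah\|_{2}^{2}\big)\Big) \ge 0.
\]
Since $x^{\ast}$ is a global minimizer of $C_{\lambda,\eta}$, the first bracket is $\ge 0$, so it suffices to prove the second bracket is $\ge 0$, i.e. $\varphi\big(\tfrac{1}{T}\|Rh\|_{2}^{2} + \eta\|Ah\|_{2}^{2}\big) \le \|h\|_{2}^{2}$ for every $h\in\mathbb{R}^{n}$.

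For that step I would write $\tfrac{1}{T}\|Rh\|_{2}^{2} + \eta\|Ah\|_{2}^{2} = h^{\top}\big(\tfrac{1}{T}R^{\top}R + \eta A^{\top}A\big)h$ and bound the quadratic form by the largest eigenvalue of the (positive semidefinite) matrix $\tfrac{1}{T}R^{\top}R + \eta A^{\top}A$. By subadditivity of the maximal eigenvalue this is at most $\tfrac{1}{T}\lambda_{\max}(R^{\top}R) + \eta\,\lambda_{\max}(A^{\top}A) = \tfrac{1}{T}\|R\|_{2}^{2} + \eta\|A\|_{2}^{2}$, using that the spectral norm squared equals the top eigenvalue of the Gram matrix. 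Hence $\tfrac{1}{T}\|Rh\|_{2}^{2} + \eta\|Ah\|_{2}^{2} \le \big(\tfrac{1}{T}\|R\|_{2}^{2} + \eta\|A\|_{2}^{2}\big)\|h\|_{2}^{2}$, and multiplying by $\varphi$ and invoking the hypothesis $0 < \varphi < \big(\tfrac{1}{T}\|R\|_{2}^{2} + \eta\|A\|_{2}^{2}\big)^{-1}$ yields $\varphi\big(\tfrac{1}{T}\|Rh\|_{2}^{2} + \eta\|Ah\|_{2}^{2}\big) \le \|h\|_{2}^{2}$, as required. Combining the two nonnegative brackets finishes the proof, and in fact shows $C_{\lambda,\eta,\varphi}(x,x^{\ast}) \ge C_{\lambda,\eta,\varphi}(x^{\ast},x^{\ast})$ for all $x$, with the inequality strict whenever $x \ne x^{\ast}$.

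There is no serious obstacle here; the argument is entirely a matter of bookkeeping. The only place demanding a little care is the spectral estimate: one must justify passing from $\|Rh\|_{2}^{2}$ to $\|R\|_{2}^{2}\|h\|_{2}^{2}$ (spectral norm, not Frobenius norm, and the subadditivity of $\lambda_{\max}$ over the sum of the two Gram matrices), since this is exactly what the restriction on the step-size parameter $\varphi$ is designed to control. Everything else follows immediately from the definition of the surrogate $C_{\lambda,\eta,\varphi}$ and the global optimality of $x^{\ast}$ for $C_{\lambda,\eta}$.
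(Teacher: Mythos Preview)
Your proof is correct and follows essentially the same approach as the paper: both arguments hinge on the two facts that (i) the step-size condition $0<\varphi<\big(\tfrac{1}{T}\|R\|_{2}^{2}+\eta\|A\|_{2}^{2}\big)^{-1}$ forces $\|h\|_{2}^{2}\ge \varphi\big(\tfrac{1}{T}\|Rh\|_{2}^{2}+\eta\|Ah\|_{2}^{2}\big)$, and (ii) global optimality of $x^{\ast}$ gives $C_{\lambda,\eta}(x)\ge C_{\lambda,\eta}(x^{\ast})$. The paper presents this as a chain of inequalities $C_{\lambda,\eta,\varphi}(x,x^{\ast})\ge \varphi C_{\lambda,\eta}(x)\ge \varphi C_{\lambda,\eta}(x^{\ast})=C_{\lambda,\eta,\varphi}(x^{\ast},x^{\ast})$ and leaves the spectral estimate implicit, whereas you subtract first and spell out the eigenvalue bound; the content is the same.
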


\begin{proof}
By the definition of $C_{\lambda,\eta,\varphi}(x,z)$, we have
\begin{eqnarray*}
C_{\lambda,\mu,\varphi}(x,x^{\ast})&=&\varphi[C_{\lambda,\eta}(x)-\frac{1}{T}\|Rx-Rx^{\ast}\|_{2}^{2}-\eta\|Ax-Ax^{\ast}\|_{2}^{2}]+\|x-x^{\ast}\|_{2}^{2}\\
&=&\varphi[\frac{1}{T}\|Rx-\beta e_{T}\|_{2}^{2}+\lambda P_{a}(x)+\eta\|Ax-b\|_{2}^{2}]-\frac{\varphi}{T}\|Rx-Rx^{\ast}\|_{2}^{2}\\
&&-\varphi\eta\|Ax-Ax^{\ast}\|_{2}^{2}+\|x-x^{\ast}\|_{2}^{2}\\
&\geq&\varphi[\frac{1}{T}\|Rx-\beta e_{T}\|_{2}^{2}+\lambda P_{a}(x)+\eta\|Ax-b\|_{2}^{2}]\\
&=&\varphi C_{\lambda,\eta}(x)\\
&\geq&\varphi C_{\lambda,\eta}(x^{\ast})\\
&=&C_{\lambda,\eta,\varphi}(x^{\ast},x^{\ast}).
\end{eqnarray*}
This completes the proof.
\end{proof}

Theorem \ref{the7} told us that $x^{\ast}$ is the optimal solution to $\displaystyle\min_{x\in \mathbb{R}^{n}}C_{\lambda,\eta,\varphi}(x,z)$ with $z=x^{\ast}$, as long as, $x^{\ast}$ is the optimal solution of the problem $(FP_{a,\lambda,\eta})$. Combined with Theorem \ref{the6}, we derive the most important conclusion in this paper, which underlies the algorithm to be proposed.

\begin{theorem}\label{the8}
Let $x^{\ast}$ be the optimal solution of the problem $(FP_{a,\lambda,\eta})$. Then $x^{\ast}$ is also the optimal solution of the following minimization problem
$$\min_{x\in \mathbb{R}^{n}}\Big\{\|x-B_{\varphi}(x^{\ast})\|_{2}^{2}+\lambda\varphi P_{a}(x)\Big\}.$$
\end{theorem}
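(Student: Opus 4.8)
The plan is to chain together Theorems~\ref{the6} and~\ref{the7}, both of which have already been established; the only additional observation required is that the ``equivalence'' asserted in Theorem~\ref{the6} is an equality of objective functions up to an additive term that is independent of the minimization variable, so that the two problems share the same set of minimizers.

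First I would apply Theorem~\ref{the7}. Since $x^{\ast}$ is the optimal solution of $(FP_{a,\lambda,\eta})$, it minimizes $C_{\lambda,\eta}$ over $\mathbb{R}^{n}$, and therefore, for any $\varphi$ in the admissible range $0<\varphi<\frac{1}{\frac{1}{T}\|R\|_{2}^{2}+\eta\|A\|_{2}^{2}}$, Theorem~\ref{the7} yields
$$C_{\lambda,\eta,\varphi}(x^{\ast},x^{\ast})\leq C_{\lambda,\eta,\varphi}(x,x^{\ast})\ \ \ \mathrm{for\ all}\ x\in \mathbb{R}^{n}.$$

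Next I would reuse the algebraic expansion of $C_{\lambda,\eta,\varphi}(x,z)$ performed in the proof of Theorem~\ref{the6}, specialized to $z=x^{\ast}$. That computation rewrites $C_{\lambda,\eta,\varphi}(x,x^{\ast})$ as $\|x-B_{\varphi}(x^{\ast})\|_{2}^{2}+\lambda\varphi P_{a}(x)$ plus a collection of terms built only from $x^{\ast}$, $\beta$, $b$, $R$ and $A$ --- in particular, nothing that depends on the free variable $x$. Calling that quantity $c(x^{\ast})$, I would substitute the identity $C_{\lambda,\eta,\varphi}(x,x^{\ast})=\|x-B_{\varphi}(x^{\ast})\|_{2}^{2}+\lambda\varphi P_{a}(x)+c(x^{\ast})$ into the inequality from the previous step and cancel $c(x^{\ast})$ from both sides, obtaining
$$\|x^{\ast}-B_{\varphi}(x^{\ast})\|_{2}^{2}+\lambda\varphi P_{a}(x^{\ast})\leq\|x-B_{\varphi}(x^{\ast})\|_{2}^{2}+\lambda\varphi P_{a}(x)\ \ \ \mathrm{for\ all}\ x\in \mathbb{R}^{n},$$
which is precisely the assertion that $x^{\ast}$ solves $\min_{x\in \mathbb{R}^{n}}\{\|x-B_{\varphi}(x^{\ast})\|_{2}^{2}+\lambda\varphi P_{a}(x)\}$.

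There is no real obstacle here: the statement is essentially a corollary of the two preceding theorems, and this is why it can be presented immediately after them without a separate proof environment. The two points that deserve a moment of care are keeping the hypothesis $0<\varphi<\frac{1}{\frac{1}{T}\|R\|_{2}^{2}+\eta\|A\|_{2}^{2}}$ in force (it is exactly what makes Theorem~\ref{the7} applicable), and noting explicitly that $c(x^{\ast})$ is independent of $x$ so that cancelling it from both sides preserves the inequality --- both of which are immediate once the expansion from the proof of Theorem~\ref{the6} is written down.
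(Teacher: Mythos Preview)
Your proposal is correct and follows exactly the paper's approach: the paper presents Theorem~\ref{the8} without a separate proof, remarking only that it follows by combining Theorems~\ref{the6} and~\ref{the7}, which is precisely the chain you describe. Your explicit observation that the extra terms in the expansion of $C_{\lambda,\eta,\varphi}(x,x^{\ast})$ are independent of $x$ (and hence cancel) makes fully rigorous the ``equivalence'' that the paper leaves implicit.
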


Combining Lemma \ref{lem2}, Definition \ref{de1} and Theorem \ref{the8}, the thresholding representation of the problem $(FP_{a,\lambda,\eta})$ can be immediately concluded as the following description.

\begin{corollary}\label{cor2}
Let $x^{\ast}\in \mathbb{R}^{n}$ be the optimal solution of the problem $(FP_{a,\lambda,\eta})$. Then it can be given by
\begin{equation}\label{equ41}
x^{\ast}=\mathcal{G}_{\lambda\varphi, P_{a}}(B_{\varphi}(x^{\ast}))
\end{equation}
where $\mathcal{G}_{\lambda\varphi,P_{a}}$ and $\mathrm{prox}_{\lambda\varphi}^{\rho_{a}}$ are obtained by replacing $\lambda$ with $\lambda\varphi$ in $\mathcal{G}_{\lambda,P_{a}}$ and $\mathrm{prox}_{\lambda}^{\rho_{a}}$.
\end{corollary}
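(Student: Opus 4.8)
The plan is to assemble Corollary~\ref{cor2} directly from the three results that immediately precede it, treating them as black boxes. First I would invoke Theorem~\ref{the8}: since $x^{\ast}$ is the optimal solution of $(FP_{a,\lambda,\eta})$, it is also the optimal solution of the decoupled minimization problem
\[
\min_{x\in \mathbb{R}^{n}}\Big\{\|x-B_{\varphi}(x^{\ast})\|_{2}^{2}+\lambda\varphi P_{a}(x)\Big\},
\]
where $B_{\varphi}(x^{\ast})=x^{\ast}+\frac{\varphi}{T}R^{\top}(\beta e_{T}-Rx^{\ast})+\varphi\eta A^{\top}(b-Ax^{\ast})$, valid for any $0<\varphi<\big(\frac{1}{T}\|R\|_{2}^{2}+\eta\|A\|_{2}^{2}\big)^{-1}$.

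The second step is to observe that the objective $\|x-B_{\varphi}(x^{\ast})\|_{2}^{2}+\lambda\varphi P_{a}(x)=\sum_{i=1}^{n}\big[(x_{i}-(B_{\varphi}(x^{\ast}))_{i})^{2}+\lambda\varphi\,\rho_{a}(x_{i})\big]$ is fully separable in the coordinates $x_{i}$. Hence its minimizer is obtained coordinatewise, and each coordinate solves exactly the scalar problem $\min_{\beta\in\mathbb{R}} f_{\lambda\varphi}(\beta)$ with $\gamma=(B_{\varphi}(x^{\ast}))_{i}$ in the notation of \eqref{equequ34}--\eqref{equ35}. By Lemma~\ref{lem2} (with $\lambda$ replaced by $\lambda\varphi$), this minimizer is $\mathrm{prox}_{\lambda\varphi}^{\rho_{a}}\big((B_{\varphi}(x^{\ast}))_{i}\big)$. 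Stacking the coordinates and using Definition~\ref{de1} gives precisely $x^{\ast}=\mathcal{G}_{\lambda\varphi,P_{a}}(B_{\varphi}(x^{\ast}))$, which is \eqref{equ41}.

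I expect no real obstacle here, since the corollary is purely a packaging of prior results; the only point needing care is the separability argument and the implicit assumption that the scalar minimizer in Lemma~\ref{lem2} is the relevant one even though $\rho_{a}$ is nonconvex, so $\mathrm{prox}$ is defined as an $\arg\min$ rather than a unique proximal map. One should note that $x^{\ast}$ being a (global) minimizer of the separable problem forces each $x^{\ast}_{i}$ to be a global minimizer of the corresponding scalar function $f_{\lambda\varphi}$, so the identification with $\mathrm{prox}_{\lambda\varphi}^{\rho_{a}}$ is legitimate whenever that scalar minimizer is unique (which Lemma~\ref{lem2}'s closed form encodes away from the threshold). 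If one wants to be fully rigorous about ties at $|\gamma|=t^{\ast}_{a,\lambda\varphi}$, one simply adopts the convention already baked into the statement of Lemma~\ref{lem2}. Thus the proof is a three-line chain: Theorem~\ref{the8} $\Rightarrow$ separability $\Rightarrow$ Lemma~\ref{lem2} plus Definition~\ref{de1}, and nothing more delicate is required.
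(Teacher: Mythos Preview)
Your proposal is correct and matches the paper's approach exactly: the paper does not give a separate proof but simply states that the corollary follows by combining Lemma~\ref{lem2}, Definition~\ref{de1}, and Theorem~\ref{the8}, which is precisely the chain you spell out (Theorem~\ref{the8} gives the separable surrogate problem, separability reduces to the scalar problem, and Lemma~\ref{lem2} with Definition~\ref{de1} identifies the solution as $\mathcal{G}_{\lambda\varphi,P_a}(B_\varphi(x^\ast))$). Your added remarks about the tie-breaking convention at the threshold are a reasonable bit of extra care beyond what the paper provides.
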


With the representation (\ref{equ41}), the IFPT algorithm for solving the problem $(FP_{a,\lambda,\eta})$ can be naturally proposed as following:
\begin{equation}\label{equ42}
x^{k+1}=\displaystyle \mathcal{G}_{\lambda\varphi, P_{a}}(B_{\varphi}(x^{k}))
\end{equation}
where $B_{\varphi}(x^{k})=x^{k}+\frac{\varphi}{T}R^{\top}(\beta e_{T}-Rx^{k})+\varphi\eta A^{\top}(b-Ax^{k})$, which means that, in per iteration, every entries of vector $x^{k+1}$ satisfies
\begin{equation}\label{equ43}
x_{i}^{k+1}=\left\{
    \begin{array}{ll}
      g_{a,\lambda\varphi}(x_{i}^{k}), & \ \ \mathrm{if} \ {|x_{i}^{k}|> t_{a,\lambda\varphi}^{\ast};} \\
      0, & \ \ \mathrm{if} \ {|x_{i}^{k}|\leq t_{a,\lambda\varphi}^{\ast}.}
    \end{array}
  \right.
\end{equation}
for $i=1,\cdots,n$, where $g_{a,\lambda\varphi}$ and $t_{a,\lambda\varphi}^{\ast}$ are all defined in Lemma \ref{lem2} which obtained by replacing $\lambda$ with $\lambda\varphi$ in $g_{a,\lambda}$ and $t_{a,\lambda}^{\ast}$, respectively.

\begin{theorem} \label{the9}
(Convergence results of IFPT algorithm) Let $\{x^{k}\}$ be the sequence generated by the FP algorithm with the step size $\varphi$ satisfying
$0<\varphi<\frac{1}{\frac{1}{T}\|R\|_{2}^{2}+\eta\|A\|_{2}^{2}}$. Then
\begin{description}
  \item[$\mathrm{1)}$] The sequence $C_{\lambda,\eta}(x^{k})$ is decreasing.
  \item[$\mathrm{2)}$] $\{x^{k}\}$ is asymptotically regular, i.e., $\lim_{k\rightarrow\infty}\|x^{k+1}-x^{k}\|_{2}^{2}=0$.
  \item[$\mathrm{3)}$] Any accumulation point of $\{x^{k}\}$ is a stationary point of the problem $(FP_{a,\lambda,\eta})$.
\end{description}
\end{theorem}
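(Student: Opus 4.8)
The plan is to prove the three convergence claims by exploiting the surrogate-function machinery already set up in Theorems~\ref{the6}--\ref{the8}, treating the IFPT iteration \eqref{equ42} as a majorization--minimization scheme. The key observation is that, for $0<\varphi<\big(\frac{1}{T}\|R\|_{2}^{2}+\eta\|A\|_{2}^{2}\big)^{-1}$, the operator $x\mapsto \mathcal{G}_{\lambda\varphi,P_{a}}(B_{\varphi}(x))$ produces, at each step, the global minimizer of the surrogate $C_{\lambda,\eta,\varphi}(\cdot,x^{k})$ (by Theorem~\ref{the6} and Lemma~\ref{lem2}), while the surrogate dominates $\varphi C_{\lambda,\eta}$ from above and agrees with it on the diagonal.

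\textbf{Step 1 (monotone decrease).} I would first show that the spectral bound on $\varphi$ makes the residual term
$\frac{1}{T}\|Rx-Rx^{k}\|_{2}^{2}+\eta\|Ax-Ax^{k}\|_{2}^{2}$ no larger than $\frac{1}{\varphi}\|x-x^{k}\|_{2}^{2}$, so that $C_{\lambda,\eta,\varphi}(x,x^{k})\geq \varphi C_{\lambda,\eta}(x)$ for all $x$, with equality at $x=x^{k}$. Since $x^{k+1}$ minimizes $C_{\lambda,\eta,\varphi}(\cdot,x^{k})$, we get
$$\varphi C_{\lambda,\eta}(x^{k+1})\leq C_{\lambda,\eta,\varphi}(x^{k+1},x^{k})\leq C_{\lambda,\eta,\varphi}(x^{k},x^{k})=\varphi C_{\lambda,\eta}(x^{k}),$$
which proves 1). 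In fact I would keep the sharper inequality
$C_{\lambda,\eta}(x^{k})-C_{\lambda,\eta}(x^{k+1})\geq \big(\frac{1}{\varphi}-\frac{1}{T}\|R\|_{2}^{2}-\eta\|A\|_{2}^{2}\big)\|x^{k+1}-x^{k}\|_{2}^{2}$, because it feeds directly into the next step.

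\textbf{Step 2 (asymptotic regularity).} The sequence $C_{\lambda,\eta}(x^{k})$ is decreasing and bounded below by $0$ (the objective is nonnegative), hence convergent; telescoping the sharper inequality from Step~1 over $k=0,1,\dots,N$ gives
$\big(\frac{1}{\varphi}-\frac{1}{T}\|R\|_{2}^{2}-\eta\|A\|_{2}^{2}\big)\sum_{k=0}^{N}\|x^{k+1}-x^{k}\|_{2}^{2}\leq C_{\lambda,\eta}(x^{0})-C_{\lambda,\eta}(x^{N+1})\leq C_{\lambda,\eta}(x^{0})$. Since the coefficient is strictly positive, the series converges, forcing $\|x^{k+1}-x^{k}\|_{2}^{2}\to 0$, which is 2). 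I would also note, using Theorem~\ref{the3}-type coercivity or simply that the $\|Ax-b\|_{2}^{2}$ term plus the data-fidelity term grows, that $\{x^{k}\}$ is bounded, so accumulation points exist.

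\textbf{Step 3 (accumulation points are stationary).} Let $x^{\ast}$ be the limit of a subsequence $x^{k_{j}}\to x^{\ast}$. By asymptotic regularity $x^{k_{j}+1}\to x^{\ast}$ as well, and $B_{\varphi}$ is continuous, so $B_{\varphi}(x^{k_{j}})\to B_{\varphi}(x^{\ast})$. The main obstacle here is that the thresholding map $\mathcal{G}_{\lambda\varphi,P_{a}}$ is \emph{not} continuous at the threshold $|\gamma|=t_{a,\lambda\varphi}^{\ast}$, so one cannot simply pass to the limit in $x^{k_{j}+1}=\mathcal{G}_{\lambda\varphi,P_{a}}(B_{\varphi}(x^{k_{j}}))$. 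I would handle this by splitting indices: for each $i$, either $|B_{\varphi}(x^{\ast})_{i}|> t_{a,\lambda\varphi}^{\ast}$ (then for large $j$ the $i$-th coordinate stays in the smooth branch $g_{a,\lambda\varphi}$ and we pass to the limit to get $x^{\ast}_{i}=g_{a,\lambda\varphi}(B_{\varphi}(x^{\ast})_{i})$, which is exactly the stationarity equation $\frac{2}{T}(R^{\top}(\beta e_T-Rx^{\ast}))_i+2\eta(A^{\top}(b-Ax^{\ast}))_i = \mathrm{sign}(x^{\ast}_i)\,\frac{a\lambda}{(1+a|x^{\ast}_i|)^2}$ of Corollary~\ref{cor1}), or $|B_{\varphi}(x^{\ast})_{i}|< t_{a,\lambda\varphi}^{\ast}$ (then eventually $x^{k_j+1}_i=0$, so $x^{\ast}_i=0$, and $i\notin\mathrm{supp}(x^{\ast})$), or the borderline case $|B_{\varphi}(x^{\ast})_{i}|= t_{a,\lambda\varphi}^{\ast}$, which one argues cannot occur in the support by the gap between the threshold and the lower bound $g_{a,\lambda\varphi}$ attains (a jump discontinuity: $g_{a,\lambda\varphi}$ maps the threshold to a value bounded away from $0$), so either $x^{\ast}_i=0$ or we are back in the first case. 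Assembling the coordinatewise conclusions shows $x^{\ast}$ satisfies the fixed-point relation $x^{\ast}=\mathcal{G}_{\lambda\varphi,P_{a}}(B_{\varphi}(x^{\ast}))$ of Corollary~\ref{cor2}, equivalently the first-order optimality conditions of Theorem~\ref{the1}, i.e. $x^{\ast}$ is a stationary point of $(FP_{a,\lambda,\eta})$. The borderline/discontinuity analysis in Step~3 is the genuinely delicate part; everything else is bookkeeping with the descent inequality.
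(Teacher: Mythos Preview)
Your proposal is correct, and Steps~1 and~2 coincide with the paper's argument: both use that $x^{k+1}$ minimizes the surrogate $C_{\lambda,\eta,\varphi}(\cdot,x^{k})$, extract the sharpened descent inequality, and telescope to obtain summability of $\|x^{k+1}-x^{k}\|_{2}^{2}$.

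For Step~3 you take a different and more laborious route than the paper. You attempt to pass to the limit directly in the fixed-point relation $x^{k_{j}+1}=\mathcal{G}_{\lambda\varphi,P_{a}}(B_{\varphi}(x^{k_{j}}))$, which forces you into a coordinatewise case analysis to handle the jump discontinuity of the thresholding map at $|\gamma|=t_{a,\lambda\varphi}^{\ast}$. The paper avoids this entirely by passing to the limit in the \emph{variational} characterization instead: since $x^{k_{j}+1}$ minimizes $\|x-B_{\varphi}(x^{k_{j}})\|_{2}^{2}+\lambda\varphi P_{a}(x)$, one has for every fixed $x$
\[
\|x^{k_{j}+1}-B_{\varphi}(x^{k_{j}})\|_{2}^{2}+\lambda\varphi P_{a}(x^{k_{j}+1})\ \leq\ \|x-B_{\varphi}(x^{k_{j}})\|_{2}^{2}+\lambda\varphi P_{a}(x),
\]
and since $P_{a}$, $\|\cdot\|_{2}$ and $B_{\varphi}$ are continuous, letting $j\to\infty$ gives the same inequality with $x^{\ast}$ in place of both $x^{k_{j}+1}$ and $x^{k_{j}}$. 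Thus $x^{\ast}$ is itself a minimizer of the proximal objective, hence $x^{\ast}=\mathcal{G}_{\lambda\varphi,P_{a}}(B_{\varphi}(x^{\ast}))$, and no borderline analysis is needed. Your approach works, but the paper's is shorter and sidesteps the discontinuity issue. Conversely, you are more careful in noting that $\{x^{k}\}$ must be bounded for accumulation points to exist; the paper leaves this implicit.
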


\begin{proof}
1) By the proof of Theorem \ref{the8}, we have
$$C_{\lambda,\eta,\varphi}(x^{k+1}, x^{k})=\min_{x\in \mathbb{R}^{n}} C_{\lambda,\eta,\varphi}(x, x^{k}).$$
Combined with the definition of $C_{\lambda,\eta}(x)$ and $C_{\lambda,\eta,\varphi}(x,z)$, we have
\begin{eqnarray*}
C_{\lambda,\eta}(x^{k+1})&=&\frac{1}{\varphi}[C_{\lambda,\mu,\eta}(x^{k+1}, x^{k})-\|x^{k+1}-x^{k}\|_{2}^{2}]+\frac{1}{T}\|Rx^{k+1}-Rx^{k}\|_{2}^{2}\\
&&+\eta\|Ax^{k+1}-Ax^{k}\|_{2}^{2}.
\end{eqnarray*}
Since $0<\varphi<\frac{1}{\frac{1}{T}\|R\|_{2}^{2}+\eta\|A\|_{2}^{2}}$, we get
\begin{equation}\label{equ44}
\begin{array}{llll}
C_{\lambda,\eta}(x^{k+1})&=&\displaystyle\frac{1}{\varphi}[C_{\lambda,\eta,\varphi}(x^{k+1}, x^{k})-\|x^{k+1}-x^{k}\|_{2}^{2}]\\
&&\displaystyle+\frac{1}{T}\|Rx^{k+1}-Rx^{k}\|_{2}^{2}+\eta\|Ax^{k+1}-Ax^{k}\|_{2}^{2}.\\
&\leq&\displaystyle\frac{1}{\mu}[C_{\lambda,\eta,\varphi}(x^{k}, x^{k})-\|x^{k+1}-x^{k}\|_{2}^{2}]\\
&&\displaystyle+\frac{1}{T}\|Rx^{k+1}-Rx^{k}\|_{2}^{2}+\eta\|Ax^{k+1}-Ax^{k}\|_{2}^{2}.\\
&=&\displaystyle C_{\lambda,\eta}(x^{k})-\frac{1}{\varphi}\|x^{k+1}-x^{k}\|_{2}^{2}+\frac{1}{T}\|Rx^{k+1}-Rx^{k}\|_{2}^{2}\\
&&+\eta\|Ax^{k+1}-Ax^{k}\|_{2}^{2}.\\
&\leq&C_{\lambda,\eta}(x^{k}).
\end{array}
\end{equation}
That is, the sequence $\{x^{k}\}$ is a minimization sequence of function $C_{\lambda,\eta}(x)$, and
$$C_{\lambda,\eta}(x^{k+1})\leq C_{\lambda,\eta}(x^{k})$$
for all $k\geq0$.\\

2) Let $\theta=1-\frac{\varphi}{T}\|R\|_{2}^{2}+\varphi\eta\|A\|_{2}^{2}$. Then $\theta\in(0, 1)$ and
\begin{equation}\label{equ45}
\frac{\varphi}{T}\|Rx^{k+1}-Rx^{k}\|_{2}^{2}+\varphi\eta\|Ax^{k+1}-Ax^{k}\|_{2}^{2}\leq(1-\theta)\|x^{k+1}-x^{k}\|_{2}^{2}.
\end{equation}
By (\ref{equ44}), we have
\begin{equation}\label{equ46}
\begin{array}{llll}
&&\displaystyle\frac{1}{\varphi}\|x^{k+1}-x^{k}\|_{F}^{2}-\frac{1}{T}\|Rx^{k+1}-Rx^{k}\|_{2}^{2}-\eta\|Ax^{k+1}-Ax^{k}\|_{2}^{2}\\
&&\leq C_{\lambda,\eta}(x^{k})-C_{\lambda,\eta}(x^{k+1}).
\end{array}
\end{equation}
Combing (\ref{equ45}) and (\ref{equ46}), we get
\begin{eqnarray*}
\sum_{k=1}^{n}\|x^{k+1}-x^{k}\|_{2}^{2}&\leq&\frac{1}{\theta}\sum_{k=1}^{n}\|x^{k+1}-x^{k}\|_{2}^{2}-\frac{1}{\theta}\sum_{k=1}^{n}\frac{\varphi}{T}\|Rx^{k+1}-Rx^{k}\|_{2}^{2}\\
&&-\frac{1}{\theta}\sum_{k=1}^{n}\varphi\eta\|Ax^{k+1}-Ax^{k}\|_{2}^{2}\\
&\leq&\frac{\varphi}{\theta}\sum_{k=1}^{n}\{C_{\lambda,\eta}(x^{k})-C_{\lambda,\eta}(x^{k+1})\}\\
&=&\frac{\varphi}{\theta}(C_{\lambda,\eta}(x^{1})-C_{\lambda,\eta}(x^{n+1}))\\
&\leq&\frac{\varphi}{\theta}C_{\lambda,\eta}(x^{1}).
\end{eqnarray*}
Thus, the series $\sum_{k=1}^{\infty}\|x^{k+1}-x^{k}\|_{2}^{2}$ is convergent, which implies that
$$\|x^{k+1}-x^{k}\|_{2}^{2}\rightarrow 0 \ \ \mathrm{as}\ \ k\rightarrow\infty.$$

3) Let $\{x^{k_{j}}\}$ be a convergent subsequence of $\{x^{k}\}$, and the limit point denoted as $x^{\ast}$, i.e.,
\begin{equation}\label{equ47}
x^{k_{j}}\rightarrow x^{\ast}\ \ \mathrm{as}\ \ k_{j}\rightarrow \infty.
\end{equation}
From
$$\|x^{k_{j}+1}-x^{\ast}\|_{2}\leq\|x^{k_{j}+1}-x^{k_{j}}\|_{2}+\|x^{k_{j}}-x^{\ast}\|_{2}$$
and
$$\|x^{k_{j}+1}-x^{k_{j}}\|_{2}+\|x^{k_{j}}-x^{\ast}\|_{2}\rightarrow 0 \ \ \mathrm{as}\ \ k_{j}\rightarrow \infty,$$
we get
\begin{equation}\label{equ48}
x^{k_{j}+1}\rightarrow x^{\ast}\ \ \mathrm{as}\ \ k_{j}\rightarrow \infty.
\end{equation}
By iteration (\ref{equ42}), it follows that
$$x^{k_{j}+1}=\mathcal{G}_{\lambda\varphi,P_{a}}(B_{\varphi}(x^{k_{j}})),$$
and combined with Theorem \ref{the5}, we have
$$\|x^{k_{j}+1}-B_{\varphi}(x^{k_{j}})\|_{2}^{2}+\lambda\varphi P_{a}(x^{k_{j}+1})\leq\|x-B_{\varphi}(x^{k_{j}})\|_{2}^{2}+\lambda\varphi P_{a}(x).$$
Taking limit and using the continuity of the function $P_{a}$ as well as (\ref{equ47}) and (\ref{equ48}), we can immediately get that
$$\|x^{\ast}-B_{\varphi}(x^{\ast})\|_{2}^{2}+\lambda\varphi P_{a}(x^{\ast})\leq\|x-B_{\varphi}(x^{\ast})\|_{2}^{2}+\lambda\varphi P_{a}(x).$$
for any $x\in \mathbb{R}^{n}$, which implies that $x^{\ast}$ minimizes the function
\begin{equation}\label{equ49}
\|x-B_{\varphi}(x^{\ast})\|_{2}^{2}+\lambda\varphi P_{a}(x),
\end{equation}
and we can conclude that
$$x^{\ast}=\mathcal{G}_{\lambda\varphi,P_{a}}(B_{\varphi}(x^{\ast})).$$
This completes the proof.
\end{proof}

An important question we should face is that the solutions of a regularization problem depends seriously on the setting of the regularization parameter $\lambda$, and the selection of proper regularization parameters is always a very hard problem. In most and general cases, a "trial and error" method, say, the cross-validation method, is still an accepted, or even unique, choice. Nevertheless, when some prior information is known for a problem, it is realistic to set the regularization parameter more reasonably and intelligently.

To make this clear, let us suppose that the portfolio is required to be $r$-sparsity, that is, the portfolio should consist of $r$ assets. Let $x^{*}$ be the optimal solution to the regularization problem $(FP_{a,\lambda,\eta})$ and $|B_{\varphi}(x^{\ast})|_{i}$ be the $i$-th largest value among the absolute elements of $B_{\varphi}(x^{\ast})$. Without loss of generality, we set
$$|B_{\varphi}(x^{*})|_{1}\geq|B_{\varphi}(x^{*})|_{2}\geq\cdots\geq|B_{\varphi}(x^{*})|_{n}.$$
Then, the following inequalities hold:
$$|B_{\varphi}(x^{*})|_{i}>t_{a,\lambda\varphi}^{\ast}\Leftrightarrow i\in \{1,2,\cdots,r\},$$
$$|B_{\varphi}(x^{*})|_{j}\leq t_{a,\lambda\varphi}^{\ast}\Leftrightarrow j\in \{r+1, r+2,\cdots, n\},$$
where $t_{a,\lambda\varphi}^{\ast}$ is the threshold value which is defined in Lemma \ref{lem2} which obtained by replacing $\lambda$ with $\lambda\varphi$ in $t_{a,\lambda}^{\ast}$.

According to $t^{2}_{a,\lambda\varphi}\leq t^{1}_{a,\lambda\varphi}$, we have
\begin{equation}\label{equ50}
\left\{
  \begin{array}{ll}
   |B_{\varphi}(x^{\ast})|_{r}\geq t^{\ast}_{a,\lambda\varphi}\geq t_{a,\lambda\varphi}^{2}=\sqrt{\lambda\varphi}-\frac{1}{2a}; \\
   |B_{\varphi}(x^{\ast})|_{r+1}<t^{\ast}_{a,\lambda\varphi}\leq t_{a,\lambda\varphi}^{1}=\frac{\lambda\varphi}{2}a,
  \end{array}
\right.
\end{equation}
which implies
\begin{equation}\label{equ51}
\frac{2|B_{\varphi}(x^{\ast})|_{r+1}}{a\varphi}\leq\lambda\leq\frac{(2a|B_{\varphi}(x^{\ast})|_{r}+1)^{2}}{4a^{2}\varphi}.
\end{equation}
For convenience, we denote by $\lambda_{1}$ and $\lambda_{2}$ the left and the right of above inequality respectively. And a choice of $\lambda$ is
$$\lambda=\left\{
            \begin{array}{ll}
              \lambda_{1}, & \ \ {\mathrm{if}\ \lambda_{1}\leq\frac{1}{a^{2}\varphi};} \\
              \lambda_{2},  &\ \ {\mathrm{if}\ \lambda_{1}>\frac{1}{a^{2}\varphi}.}
            \end{array}
          \right.
$$
In practice, we approximate $x^{*}$ by $x^{k}$ in (\ref{equ51}), say, we can take
\begin{equation}\label{equ52}
\begin{array}{llll}
\lambda=\left\{
            \begin{array}{ll}
              \lambda_{1,k}=\frac{2|B_{\varphi}(x^{\ast})|_{r+1}}{a\varphi},  & \ \ {\mathrm{if}\ \lambda_{1,k}\leq\frac{1}{a^{2}\varphi};} \\
              \lambda_{2,k}=\frac{(2a|B_{\varphi}(x^{\ast})|_{r}+1)^{2}}{4a^{2}\varphi},  & \ \ {\mathrm{if}\ \lambda_{1,k}>\frac{1}{a^{2}\varphi}.}
            \end{array}
          \right.
\end{array}
\end{equation}
in applications.

When doing so, the IFPT algorithm will be adaptive and free from the choice of regularization parameter.

Notice that (\ref{equ52}) is valid for any $\varphi$ satisfying
$$0<\varphi<\frac{1}{\frac{1}{T}\|R\|_{2}^{2}+\eta\|A\|_{2}^{2}}.$$
In general, we can take
$$\varphi=\frac{1-\varepsilon}{\frac{1}{T}\|R\|_{2}^{2}+\eta\|A\|_{2}^{2}}$$
with any small $\varepsilon\in(0,1)$ below.

There is one more thing needed to be mentioned that the threshold value
$$t_{a,\lambda\varphi}^{\ast}=\left\{
            \begin{array}{ll}
              \frac{\lambda\varphi}{2}a,  & \ \ {\mathrm{if}\ \lambda=\lambda_{1,k};} \\
              \sqrt{\lambda\varphi}-\frac{1}{2a},  & \ \ {\mathrm{if}\ \lambda=\lambda_{2,k}.}
            \end{array}
          \right.
$$

\begin{algorithm}[h!]
\caption{: IFPT algorithm}
\label{alg:A}
\begin{algorithmic}
\STATE {Initialize: Choose $x^{0}\in \mathbb{R}^{n}$, $\varphi=\frac{1-\varepsilon}{\frac{1}{T}\|R\|_{2}^{2}+\eta\|A\|_{2}^{2}}$ and $a=a_{0}$ ($a_{0}$ is a given positive number);}
\STATE {\textbf{while} not converged \textbf{do}}
\STATE \ \ \ \ \ \ \ {$B_{\varphi}(x^{k})=x^{k}+\frac{\varphi}{T}R^{\top}(\beta e_{T}-Rx^{k})+\varphi\eta A^{\top}(b-Ax^{k})$;}
\STATE \ \ \ \ \ \ \ {$\lambda_{1,k}=\frac{2|B_{\varphi}(x^{k})|_{r+1}}{a\varphi}$; $\lambda_{2,k}=\frac{(2a|B_{\varphi}(x^{k})|_{r}+1)^{2}}{4a^{2}\varphi}$;}
\STATE \ \ \ \ \ \ \ {if\ $\lambda_{1,k}\leq\frac{1}{a^{2}\varphi}$\ then}
\STATE \ \ \ \ \ \ \ \ \ \ \ \ {$\lambda=\lambda_{1,k}$; $t_{a,\lambda\varphi}^{\ast}=\frac{\lambda\varphi}{2}a$}
\STATE \ \ \ \ \ \ \ \ \ \ \ \ {for\ $i=1:n$}
\STATE \ \ \ \ \ \ \ \ \ \ \ \ \ \ \ {1.\ $|B_{\varphi}(x^{k})_{i}|>t_{a,\lambda\varphi}^{\ast}$, then $x^{k+1}_{i}=g_{\lambda\varphi}(B_{\varphi}(x^{k})_{i})$;}
\STATE \ \ \ \ \ \ \ \ \ \ \ \ \ \ \ {2.\ $|B_{\varphi}(x^{k})_{i}|\leq t_{a,\lambda\varphi}^{\ast}$, then $x^{k+1}_{i}=0$;}
\STATE \ \ \ \ \ \ \ {else}
\STATE \ \ \ \ \ \ \ \ \ \ \ \ {$\lambda=\lambda_{2,k}$; $t_{a,\lambda\varphi}^{\ast}=\sqrt{\lambda\varphi}-\frac{1}{2a}$}
\STATE \ \ \ \ \ \ \ \ \ \ \ \ {for\ $i=1:n$}
\STATE \ \ \ \ \ \ \ \ \ \ \ \ \ \ \ {1.\ $|B_{\varphi}(x^{k})_{i}|>t_{a,\lambda\varphi}^{\ast}$, then $x^{k+1}_{i}=g_{\lambda\varphi}(B_{\varphi}(x^{k})_{i})$;}
\STATE \ \ \ \ \ \ \ \ \ \ \ \ \ \ \ {2.\ $|B_{\varphi}(x^{k})_{i}|\leq t_{a,\lambda\varphi}^{\ast}$, then $x^{k+1}_{i}=0$;}
\STATE \ \ \ \ \ \ \ {end}
\STATE \ \ \ \ \ \ \ {$k\rightarrow k+1$}
\STATE{\textbf{end while}}
\STATE{\textbf{return}: $x^{k+1}$}
\end{algorithmic}
\end{algorithm}

%The solution to the problem $(FP_{a,\lambda,\eta})$, for varying $\lambda$, can be found by implementing the IFPT algorithm. However, the choice of $\eta$ will affect the stability
%and the algorithm becomes unstable if $\eta$ is too large. It depends on the data whether the $x^{[\lambda,\eta]}$, for the problem at hand, settle down in their asymptotic
%regime before $\eta$ becomes so large as to make the computation numerically unstable. For the data studied in this paper (see Section \ref{section5} below), this turns out to
%be the case. We are able to obtain very close approximations to $x^{[\lambda]}$ by computing $x^{[\lambda,\eta]}$.
%

\subsection{INFPT algorithm for solving $(FP_{a,\lambda,\eta}^{\geq})$}\label{subsection4-2}
Inspired by the IFPT algorithm given in subsection \ref{subsection4-1}, we propose the INFPT algorithm to solve the problem $(FP_{a,\lambda,\eta}^{\geq})$ for all $a>0$.

\begin{definition}\label{de2}
Given any vector $v\in \mathbb{R}^{n}$, define the projection map on $\mathbb{R}^{n}_{+}$ by
\begin{equation}\label{equ53}
\mathcal{P}_{+}(v)\triangleq\arg\min_{\vartheta\in \mathbb{R}^{n}}\{\|\vartheta-v\|_{2}^{2}: \ \ \vartheta\geq 0\}=\max\{0, v\}.
\end{equation}
\end{definition}

\begin{theorem}\label{the10}
Let $v\in \mathbb{R}^{n}$. Then
\begin{equation}\label{equ54}
\mathcal{G}_{\lambda, P_{a}}(\mathcal{P}_{+}(v))\triangleq\arg\min_{x\in \mathbb{R}^{n}}\Big\{\|x-v\|_{2}^{2}+\lambda P_{a}(x): \ x\geq0\Big\}
\end{equation}
where $\mathcal{G}_{\lambda, P_{a}}$ is defined in Definition \ref{de1} and $\mathcal{P}_{+}$ is defined in Definition \ref{de2}.
\end{theorem}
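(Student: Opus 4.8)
The plan is to exploit the complete separability of both the objective and the constraint across coordinates, reducing the $n$-dimensional constrained problem in (\ref{equ54}) to $n$ independent scalar problems of the form $\min_{t\ge 0}\{(t-v_i)^2+\lambda\rho_a(t)\}$, and then to identify each scalar minimizer with $\mathrm{prox}_\lambda^{\rho_a}(\max\{0,v_i\})$. Assembling the coordinates afterwards yields $\mathcal{G}_{\lambda,P_a}(\mathcal{P}_+(v))$ by Definitions \ref{de1} and \ref{de2}.

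First I would write $\|x-v\|_2^2+\lambda P_a(x)=\sum_{i=1}^n\big[(x_i-v_i)^2+\lambda\rho_a(x_i)\big]$ and observe that the feasible set $\{x\ge 0\}$ is the Cartesian product $\prod_i[0,\infty)$, so a vector $x$ solves the problem on the right-hand side of (\ref{equ54}) if and only if each $x_i$ solves $\min_{t\ge 0}\phi_i(t)$ with $\phi_i(t)=(t-v_i)^2+\lambda\rho_a(t)$. Thus the claimed identity is exactly the scalar statement that, for every $c\in\mathbb{R}$, $\arg\min_{t\ge 0}\{(t-c)^2+\lambda\rho_a(t)\}=\mathrm{prox}_\lambda^{\rho_a}(\max\{0,c\})$, which I would then prove by splitting on the sign of $c$.

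If $c<0$, then on $[0,\infty)$ both $t\mapsto(t-c)^2=(t+|c|)^2$ and $t\mapsto\rho_a(t)$ are nondecreasing (the latter because $\rho_a$ is increasing on $[0,\infty)$, as noted just after (\ref{equ6})), so $\phi_i$ is nondecreasing on $[0,\infty)$ and attains its constrained minimum at $t=0$; since $\max\{0,c\}=0$ and $\mathrm{prox}_\lambda^{\rho_a}(0)=0$, the identity holds. If $c\ge 0$, I would show that the unconstrained minimizer $\mathrm{prox}_\lambda^{\rho_a}(c)$ is already nonnegative, so imposing $t\ge 0$ is vacuous: if $\beta^\ast<0$ minimized $f_\lambda(\beta)=(\beta-c)^2+\lambda\rho_a(\beta)$, then, using that $\rho_a$ is even and $4\beta^\ast c\le 0$, one gets $f_\lambda(-\beta^\ast)=(\beta^\ast+c)^2+\lambda\rho_a(\beta^\ast)\le(\beta^\ast-c)^2+\lambda\rho_a(\beta^\ast)=f_\lambda(\beta^\ast)$, with strict inequality when $c>0$ (and when $c=0$ the minimizer is trivially $0$), contradicting minimality; hence the minimizer is $\ge 0$ and $\arg\min_{t\ge 0}\phi_i(t)=\mathrm{prox}_\lambda^{\rho_a}(c)=\mathrm{prox}_\lambda^{\rho_a}(\max\{0,c\})$. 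Alternatively this sign-preservation can be read directly off the closed form $g_{a,\lambda}$ in (\ref{equ37}), whose prefactor $\mathrm{sign}(\gamma)$ makes it preserve the sign of its argument.

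Stacking the $n$ scalar conclusions, the constrained minimizer has $i$-th component $\mathrm{prox}_\lambda^{\rho_a}(\max\{0,v_i\})$, which by Definitions \ref{de1}--\ref{de2} is exactly $\mathcal{G}_{\lambda,P_a}(\mathcal{P}_+(v))$, proving the theorem. The only delicate point is the case $c\ge 0$, i.e.\ checking that $\mathrm{prox}_\lambda^{\rho_a}$ never sends a nonnegative number to a negative one; I expect this to be the main (though minor) obstacle, and it is disposed of either by the reflection/even-symmetry argument above or by inspection of (\ref{equ37}). Everything else is the routine separability reduction together with the monotonicity of $\rho_a$ on the half-line.
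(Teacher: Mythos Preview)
Your proof is correct, and at its core it coincides with the paper's: both arguments split on the sign of $v_i$, show that the constrained optimum is $0$ on coordinates with $v_i<0$, and that the nonnegativity constraint is inactive on coordinates with $v_i\ge 0$. The presentations differ, however. You separate immediately into $n$ scalar problems and dispose of each by a direct case analysis, invoking the monotonicity of $\rho_a$ on $[0,\infty)$ for $c<0$ and an even-symmetry/reflection argument (or the explicit form (\ref{equ37})) for $c\ge 0$. The paper instead keeps the vector form and runs a chain of $\arg\min$ identities, splitting the index set into $\mathcal{I}^{+}=\{i:v_i\ge 0\}$ and $\mathcal{I}^{-}=\{i:v_i<0\}$, dropping the $\mathcal{I}^{-}$ coordinates to zero, and then using the identity $\|(x-v)_{+}\|_2^2+\|x_{-}\|_2^2=\|x-\mathcal{P}_+(v)\|_2^2$ to arrive at the unconstrained problem $\min_x\{\|x-\mathcal{P}_+(v)\|_2^2+\lambda P_a(x)\}$.

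Your route is slightly more elementary and, in one respect, more complete: the paper's chain silently drops the constraint $x_i\ge 0$ on the indices $i\in\mathcal{I}^{+}$ when passing to the unconstrained minimization, which is only legitimate because $\mathrm{prox}_\lambda^{\rho_a}$ maps $[0,\infty)$ into itself---exactly the sign-preservation you prove explicitly. Conversely, the paper's vectorized identity (ii) gives a compact way to reassemble the pieces without restating Definition~\ref{de1} coordinate by coordinate. Either argument is fine; yours is a clean alternative.
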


\begin{proof}
Given any vector $v\in \mathbb{R}^{n}$, let us introduce the following notations
$$x_{+}=x_{\mathcal{I}^{+}}\ \ \mathrm{and}\ \ x_{-}=x_{\mathcal{I}^{-}},$$
where
$$\mathcal{I}^{+}=\{i\ |\ \ i\in(1,2,\cdots,n), \ \ x_{i}\geq0\}$$
and
$$\mathcal{I}^{-}=\{i\ |\ \ i\in(1,2,\cdots,n), \ \ x_{i}<0\}.$$
Observe that the following relations hold
\begin{description}
  \item[(\romannumeral1)] $\|x\|_{2}^{2}=\|x_{+}\|_{2}^{2}+\|x_{-}\|_{2}^{2}$;
  \item[(\romannumeral2)] $\|(x-v)_{+}\|_{2}^{2}+\|x_{-}\|_{2}^{2}=\|x-\mathcal{P}_{+}(v)\|_{2}^{2}$;
  \item[(\romannumeral3)] $\|x_{-}\|_{2}^{2}=0\Leftrightarrow x_{i}=0\ \ \ \forall i\in \mathcal{I}^{-}$,
\end{description}
where the second relation follows from relation (i) and the fact that $(\mathcal{P}_{+}(v))_{i}=v_{i}$ for any $i\in \mathcal{I}^{+}$ and
$(\mathcal{P}_{+}(v))_{i}=0$ for any $i\in \mathcal{I}^{-}$. By these facts, we can get that
$$\bar{x}=\mathcal{G}_{\lambda, P_{a}}(\mathcal{P}_{+}(v))$$
if and only if
\begin{eqnarray*}
\bar{x}&=&\arg\min_{x\in \mathbb{R}^{n}}\Big\{\|x-v\|_{2}^{2}+\lambda P_{a}(x): \ x\geq0\Big\}\\
&=&\arg\min_{x\in \mathbb{R}^{n}}\Big\{\|(x-v)_{+}\|_{2}^{2}+\|(x-v)_{-}\|_{2}^{2}+\lambda P_{a}(x): \ x\geq0\Big\}\\
&=&\arg\min_{x\in \mathbb{R}^{n}}\Big\{\Big(\|(x-v)_{+}\|_{2}^{2}+\|x_{-}\|_{2}^{2}-2\sum_{i\in \mathcal{I}^{-}}x_{i}v_{i}\Big)+\lambda P_{a}(x): \ x\geq0\Big\}\\
&=&\arg\min_{x\in \mathbb{R}^{n}}\Big\{\|(x-v)_{+}\|_{2}^{2}+\lambda P_{a}(x): \ x_{i}=0\ \ \forall i\in \mathcal{I}^{-}, \ x\geq0\Big\}\\
&=&\arg\min_{x\in \mathbb{R}^{n}}\Big\{\|(x-v)_{+}\|_{2}^{2}+\lambda P_{a}(x): \ \|x_{-}\|_{2}^{2}=0\Big\}\\
&=&\arg\min_{x\in \mathbb{R}^{n}}\Big\{\Big(\|(x-v)_{+}\|_{2}^{2}+\|x_{-}\|_{2}^{2}\Big)+\lambda P_{a}(x)\Big\}\\
&=&\arg\min_{x\in \mathbb{R}^{n}}\Big\{\|x-\mathcal{P}_{+}(v)\|_{2}^{2}+\lambda P_{a}(x)\Big\}\\
&=&\mathcal{G}_{\lambda,P_{a}}(\mathcal{P}_{+}(v)).
\end{eqnarray*}
This completes the proof.
\end{proof}

By Theorem \ref{the10} and inspired by iteration (\ref{equ42}), the  procedure of the INFPT algorithm for solving the regularization problem $(FP_{a,\lambda,\eta}^{\geq})$ can be inductively defined as
\begin{equation}\label{equ55}
x^{k+1}=\mathcal{G}_{\lambda, P_{a}}(\mathcal{P}_{+}(B_{\varphi}(x^{k})))
\end{equation}
where $B_{\varphi}(x^{k})=x^{k}+\frac{\varphi}{T}R^{\top}(\beta e_{T}-Rx^{k})+\varphi\eta A^{\top}(b-Ax^{k})$.

The difference between INFPT algorithm and IFPT algorithm is that the operator $\mathcal{G}_{\lambda, P_{a}}(\mathcal{P}_{+}(\cdot))$ acts only on the nonnegative part of the real line since all the $\mathcal{P}_{+}(\cdot)$ are nonnegative.
%So, the convergence of the INFPT algorithm can be proved directly as the convergence results of IFPT algorithm in Theorem \ref{the8} in which we just select the nonnegative part of the real line in every iteration of the IFPT algorithm.

In addition, the regularization parameter $\lambda$ in INFPT algorithm can be selected as similarly as the IFPT algorithm. When doing so, the INFPT algorithm will also be adaptive and free from the choice of the regularization parameter.

Similarly, we suppose that the nonnegative vector $x_{+}^{\ast}$ of sparsity $\tilde{r}$ is the optimal solution to the regularization problem $(FP_{a,\lambda,\eta}^{\geq})$, and $|\mathcal{P}_{+}(B_{\varphi}(x^{k}))|_{i}$ be the $i$-th largest value among the elements of the nonnegative vector $\mathcal{P}_{+}(B_{\varphi}(x^{k}))$. Without loss of generality, we set
\begin{equation}\label{equ56}
|\mathcal{P}_{+}(B_{\varphi}(x^{k}))|_{1}\geq|\mathcal{P}_{+}(B_{\varphi}(x^{k}))|_{2}\geq\cdots\geq|\mathcal{P}_{+}(B_{\varphi}(x^{k}))|_{n}\geq0.
\end{equation}
Then the following inequalities hold:
$$|\mathcal{P}_{+}(B_{\varphi}(x^{k}))|_{i}>t_{a,\lambda\varphi}^{\ast}\Leftrightarrow i\in \{1,2,\cdots,\tilde{r}\},$$
$$|\mathcal{P}_{+}(B_{\varphi}(x^{k}))|_{j}\leq t_{a,\lambda\varphi}^{\ast}\Leftrightarrow j\in \{\tilde{r}+1, \tilde{r}+2,\cdots, n\}.$$
In accordance with the selection of the regularization parameter in IFPT algorithm, the optimal regularization parameter for the INFPT algorithm can be selected as
\begin{equation}\label{equ57}
\begin{array}{llll}
\lambda=\left\{
            \begin{array}{ll}
              \lambda_{1,k}=\frac{2(\mathcal{P}_{+}(B_{\varphi}(x^{k})))_{\tilde{r}+1}}{a\varphi},  & \ \ {\mathrm{if}\ \lambda_{1,k}\leq\frac{1}{a^{2}\varphi};} \\
              \lambda_{2,k}=\frac{(2a(\mathcal{P}_{+}(B_{\varphi}(x^{k})))_{\tilde{r}}+1)^{2}}{4a^{2}\varphi},  & \ \ {\mathrm{if}\ \lambda_{1,k}>\frac{1}{a^{2}\varphi}.}
            \end{array}
          \right.
\end{array}
\end{equation}
We also take
$$\varphi=\frac{1-\varepsilon}{\frac{1}{T}\|R\|_{2}^{2}+\eta\|A\|_{2}^{2}}$$
with any small $\varepsilon\in(0,1)$ below, and
$$t_{a,\lambda\mu}^{\ast}=\left\{
            \begin{array}{ll}
              \frac{\lambda\varphi}{2}a,  & \ \ {\mathrm{if}\ \lambda=\lambda_{1,k};} \\
              \sqrt{\lambda\varphi}-\frac{1}{2a},  & \ \ {\mathrm{if}\ \lambda=\lambda_{2,k}.}
            \end{array}
          \right.
$$

\begin{algorithm}[h!]
\caption{: INFPT algorithm}
\label{alg:B}
\begin{algorithmic}
\STATE {Initialize: Choose $x^{0}\in \mathbb{R}^{n}$, $\varphi=\frac{1-\varepsilon}{\frac{1}{T}\|R\|_{2}^{2}+\eta\|A\|_{2}^{2}}$ and $a=a_{0}$ ($a_{0}$ is a given positive number);}
\STATE {\textbf{while} not converged \textbf{do}}
\STATE \ \ \ \ \ \ \ {$\mathcal{P}_{+}(B_{\varphi}(x^{k}))=\mathcal{P}_{+}(x^{k}+\frac{\varphi}{T}R^{\top}(\beta e_{T}-Rx^{k})+\varphi\eta A^{\top}(b-Ax^{k}))$;}
\STATE \ \ \ \ \ \ \ {$\lambda_{1,k}=\frac{2|\mathcal{P}_{+}(B_{\varphi}(x^{k}))|_{\tilde{r}+1}}{a\varphi}$; $\lambda_{2,k}=\frac{(2a|\mathcal{P}_{+}(B_{\varphi}(x^{k}))|_{\tilde{r}}+1)^{2}}{4a^{2}\varphi}$;}
\STATE \ \ \ \ \ \ \ {if\ $\lambda_{1,k}\leq\frac{1}{a^{2}\varphi}$\ then}
\STATE \ \ \ \ \ \ \ \ \ \ \ \ {$\lambda=\lambda_{1,k}$; $t_{a,\lambda\varphi}^{\ast}=\frac{\lambda\varphi}{2}a$}
\STATE \ \ \ \ \ \ \ \ \ \ \ \ {for\ $i=1:n$}
\STATE \ \ \ \ \ \ \ \ \ \ \ \ \ \ \ {1.\ $\mathcal{P}_{+}(B_{\varphi}(x^{k}))_{i}>t_{a,\lambda\varphi}^{\ast}$, then $x^{k+1}_{i}=g_{\lambda\varphi}(\mathcal{P}_{+}(B_{\varphi}(x^{k}))_{i})$;}
\STATE \ \ \ \ \ \ \ \ \ \ \ \ \ \ \ {2.\ $\mathcal{P}_{+}(B_{\varphi}(x^{k}))_{i}\leq t_{a,\lambda\varphi}^{\ast}$, then $x^{k+1}_{i}=0$;}
\STATE \ \ \ \ \ \ \ {else}
\STATE \ \ \ \ \ \ \ \ \ \ \ \ {$\lambda=\lambda_{2,k}$; $t_{a,\lambda\varphi}^{\ast}=\max\{\sqrt{\lambda\varphi}-\frac{1}{2a},0\}$}
\STATE \ \ \ \ \ \ \ \ \ \ \ \ {for\ $i=1:n$}
\STATE \ \ \ \ \ \ \ \ \ \ \ \ \ \ \ {1.\ $\mathcal{P}_{+}(B_{\varphi}(x^{k}))_{i}>t_{a,\lambda\varphi}^{\ast}$, then $x^{k+1}_{i}=g_{\lambda\varphi}(\mathcal{P}_{+}(B_{\varphi}(x^{k}))_{i})$;}
\STATE \ \ \ \ \ \ \ \ \ \ \ \ \ \ \ {2.\ $\mathcal{P}_{+}(B_{\varphi}(x^{k}))_{i}\leq t_{a,\lambda\varphi}^{\ast}$, then $x^{k+1}_{i}=0$;}
\STATE \ \ \ \ \ \ \ {end}
\STATE \ \ \ \ \ \ \ {$k\rightarrow k+1$}
\STATE{\textbf{end while}}
\STATE{\textbf{return}: $x^{k+1}$}
\end{algorithmic}
\end{algorithm}

\section{Numerical experiments} \label{section5}
In this section, we apply the IFPT algorithm and INFPT algorithm described above to construct the optimal (sparse) portfolios with and without short-selling constraints,
and carry out a series of simulations to evaluate their out-of-sample performance. The tests and comparisons are performed on two sets of portfolios from Fama and French web site\footnote{
http://mba.tuck.dartmouth.edu/pages/faculty/ken.french/data\underline{\hbox to 1mm{}}library.html}: the 48 industry portfolios (FF48) and 100 portfolios formed on size and book-to-market (FF100), ranging from July 1976 to June 2006. Tests use these two real market data during a period of 30 years from July 1976 to June 2006, and the time period is divided into 6 equal sub-periods.
To determine $R$ and $\mu$, we use the historical returns from July 1971 until June 1976. We then solve the optimization problems using this matrix and vector, targeting an annualized
return $\beta$, equal to the average historical return, from July 1971 until June 1976, obtained by a portfolio in which all industry sectors are given the equal weight 1/n. The performance
of each portfolio is evaluated by looking at its out-of-sample total return, $m$, its out-of-sample variance, $\sigma$, and its out-of-sample Sharpe ratio, $S=\frac{m}{\sigma}$. In
order to understand the effect of sparsity on the performance of resulting portfolios, the value of $k$ in the following tables specifies the number of assets in a portfolio. In all the experiments,
we set $a=1$.

Firstly, we present the numerical results of IFPT algorithm in FF48 problem with short-selling constraint, and compare them with those obtained with the $\ell_{1}$-norm regularization portfolio
selection model (solved by LARS algorithm \cite{bro8,efron18}). For the sake of simplicity, we renamed the $\ell_{1}$-norm regularization portfolio selection model (\ref{equ4}) solved by LARS
algorithm as La($\ell_{1}$). And then we show the performance of INFPT algorithm in finding the sparse portfolio weights in FF100 problem without short-selling constraint.

\begin{table}[h!]
\centering
\begin{tabular}{|c||l|l|l|l|l|l|l|l|}\hline
Item&\multicolumn{2}{c}{$k=6$}&\multicolumn{2}{|c}{$k=8$}&\multicolumn{2}{|c}{$k=10$}&\multicolumn{2}{|c|}{$k=12$}\\
\hline
Period&IFPT&La($\ell_{1}$)&IFPT&La($\ell_{1}$)&IFPT&La($\ell_{1}$)&IFPT&La($\ell_{1}$)\\
\hline
07/76-06/81& 9.09&  6.95& 7.09& 7.06& 9.77& 8.29& 9.81& 9.48\\
\hline
07/81-06/86& 7.35& 7.87& 6.14& 7.58& 8.20& 8.06& 8.37& 8.08\\
\hline
07/86-06/91& 3.19& 3.18& 3.49& 3.20& 3.72& 3.51& 3.90& 3.63\\
\hline
07/91-06/96& 8.36& 7.88& 8.52& 7.67& 8.86& 7.30& 9.49& 6.52\\
\hline
07/96-06/01& 3.72& 4.21& 4.17& 4.15& 4.54& 3.47& 3.46& 3.44\\
\hline
07/01-06/06& 1.61& 1.55& 1.63& 1.49& 2.48& 1.04& 2.66& 0.49 \\
\hline\hline
07/76-06/06& 23.86& 28.28& 24.33& 28.26& 28.58& 27.92& 30.06& 27.14\\
\hline
\end{tabular}
\caption{\scriptsize Comparison results of Sharpe Ratio $S$ between IFPT algorithm and La($\ell_{1}$) with $k=6,8,10,12$ in problem FF48 with short-selling constraint.}\label{table1}
\end{table}

\begin{table}[h!]
\centering
\begin{tabular}{|c||l|l|l|l|l|l|l|l|}\hline
Item&\multicolumn{2}{c}{$k=14$}&\multicolumn{2}{|c}{$k=16$}&\multicolumn{2}{|c}{$k=18$}&\multicolumn{2}{|c|}{$k=20$}\\
\hline
Period&IFPT&La($\ell_{1}$)&IFPT&La($\ell_{1}$)&IFPT&La($\ell_{1}$)&IFPT&La($\ell_{1}$)\\
\hline
07/76-06/81& 10.33& 9.98& 10.03& 8.81& 9.49& 8.61& 9.24& 8.60\\
\hline
07/81-06/86& 6.59&  6.48& 7.13& 5.89& 6.79& 5.61& 6.60& 5.58 \\
\hline
07/86-06/91& 3.93& 3.88& 3.92& 3.69& 3.88& 3.66& 3.92& 3.59\\
\hline
07/91-06/96& 9.22& 4.92& 9.36& 4.49& 9.56& 4.07& 9.39& 4.04\\
\hline
07/96-06/01& 3.75& 3.28& 3.15& 3.02& 3.47& 3.04& 3.31& 3.06\\
\hline
07/01-06/06& 2.61& 0.01& 3.00& -0.09& 2.56&  -0.09& 2.34& 0.01\\
\hline\hline
07/76-06/06& 27.27& 22.62& 29.27& 20.97& 29.83& 20.77& 29.90& 20.95\\
\hline
\end{tabular}
\caption{\scriptsize Comparison results of Sharpe Ratio $S$ between IFPT algorithm and La($\ell_{1}$) with $k=14,16,18,20$ in problem FF48 with short-selling constraint.}\label{table2}
\end{table}

\begin{table}[h!]
\centering
\begin{tabular}{|c||l|l|l|l|l|l|l|l|}\hline
Period&$k=6$&$k=8$&$k=10$&$k=12$&$k=14$&$k=16$&$k=18$&$k=20$\\
\hline
07/76-06/81& 3.41& 3.52& 3.52& 3.59& 3.60& 3.65& 3.69& 3.79\\
\hline
07/81-06/86& 4.59& 4.66& 4.68& 4.70& 4.71& 4.96& 4.99& 5.02\\
\hline
07/86-06/91& 2.14& 2.12& 2.11& 2.09& 2.07& 2.06& 2.00& 1.98 \\
\hline
07/91-06/96& 12.45& 12.62& 12.61& 12.59& 12.65& 12.70& 12.74& 12.76\\
\hline
07/96-06/01& 3.69& 4.17& 4.18& 4.17& 4.14& 4.13& 4.05& 4.01\\
\hline
07/01-06/06& 1.47& 1.38& 1.41& 1.44& 1.52& 1.71& 1.76& 2.02\\
\hline\hline
07/76-06/06& 22.12& 22.59& 22.75& 22.87& 23.05& 23.41& 23.50& 23.81\\
\hline
\end{tabular}
\caption{\scriptsize The performance of INFPT algorithm in problem FF100 without short-selling constraint with $k=6,8,10,12,14,16,18,20$.}\label{table3}
\end{table}

Tables \ref{table1} and \ref{table2} report the numerical results of IFPT algorithm and La($\ell_{1}$) in FF48 problem with short-selling constraint and vary $k$ from $6$ to $20$ with step size $2$.
The numerical results show that the performance of the IFPT algorithm is better than the performance of La($\ell_{1}$) in all periods except in the period 07/76-06/06. Table \ref{table3} reports the performance of INFPT algorithm in problem FF100 without short-selling constraint. It can be observed from Table \ref{table3} that the INFPT algorithm performs effectively in finding the sparse portfolio weights in FF100 problem without short-selling constraint.

\section{Conclusions}\label{section6}
The sparsity requirement in portfolio selection problems comes from the real world practice, where the administration of a portfolio made up of a large number of assets, possibly with very small holdings for some of them, is clearly not desirable because of the transactions costs and the complexity of management. In this paper, a continuous and non-convex sparsity promoting fraction function is studied in two sparse portfolio selection models with and without short-selling constraints in terms of theory, algorithms and computation. Firstly, we study the properties of the optimal solution to the problem $(FP_{a,\lambda,\eta})$ including the first-order and the second optimality condition and the lower and upper bound of the absolute value for its nonzero entries. Secondly, the IFPT algorithm is proposed to solve the problem $(FP_{a,\lambda,\eta})$ for all $a>0$. Moreover, we also prove that the value of the regularization parameter $\lambda>0$ can not be chosen too large. Indeed, there exists $\bar{\lambda}>0$ such that the optimal solution to the problem $(FP_{a,\lambda,\eta})$ is equal to zero for any $\lambda>\bar{\lambda}$. At last, inspired by the thresholding representation of the IFPT algorithm, the INFPT algorithm is proposed to solve the problem $(FP_{a,\lambda,\eta}^{\geq})$ for all $a>0$. Empirical results show that our methods perform effective in finding the sparse portfolio weights in FF48 and FF100 problems with and without short-selling constraints.

\begin{acknowledgements}
We would like to thank editorial and referees for their comments which help us to enrich the content and improve the presentation of the results in this paper. The work was supported by
the National Natural Science Foundations of China (11771347, 11131006, 41390450).
\end{acknowledgements}

% BibTeX users please use one of
%\bibliographystyle{spbasic}      % basic style, author-year citations
%\bibliographystyle{spmpsci}      % mathematics and physical sciences
%\bibliographystyle{spphys}       % APS-like style for physics
%\bibliography{}   % name your BibTeX data base

\begin{thebibliography}{}

\bibitem{mark1}
H. M. Markowitz, Portfolio selection. Journal of Finance, 7(1): 77-91, 1952.

\bibitem{arma2}
R. Arma$\tilde{\mathrm{n}}$anzas, J. A. Lozano, A multiobjective approach to the portfolio optimization problem.
Proceedings of the 2005 IEEE Congress on Evolutionary Computation, 2: 1388-1395, 2005.

\bibitem{ber3}
D. Bertsimas, R. Shioda, Algorithms for cardinality-constrained quadratic optimization. Computational Optimization and Applications, 43: 1-22, 2009.

\bibitem{bien4}
D. Bienstock, Computational study of a family of mixed-integer quadratic programming problems. Mathematical Programming, 74: 121-140, 1996.

\bibitem{chang5}
T. Chang, N. Meade, J. E. Beasley, and Y. M. Sharaiha, Heuristics for cardinality constrained portfolio optimization. Computers and
Operations Research, 27(13): 1271-1302, 2000.

\bibitem{crama6}
Y. Crama, M. Schyns, Simulated annealing for complex portfolio selection problems. European Journal of Operational Research, 150(3): 546-571, 2003.

\bibitem{gfiel7}
J.E. Fieldsend, J. Matatko, and M. Peng, Cardinality constrained portfolio optimisation, Lecture Notes in Computer Science (LNCS 3177), Z.R. Yang, R. Everson and
H. Yin (Eds.), Springer, 2004, pp. 788-793.


\bibitem{bro8}
J. Brodie, I. Daubechies, C. D. Molc, D. Giannoned, and I. Lorisce. Sparse and stable markowitz portfolios. European Central Bank, Working Paper Series, 2008.

\bibitem{got9}
J. Y. Gotoh, A. Takeda, On the role of norm constraints in portfolio selection. Computational Management Science, 8: 323-353, 2011.

\bibitem{yena10}
Y. M. Yena, T. J. Yen, Solving norm constrained portfolio optimization via coordinate-wise descent algorithms. Computational Statistics and Data Analysis, 76: 737-759, 2014.

\bibitem{lore11}
D. Lorenzo, G. Liuzzi, F. Rinaldi, F. Schoen and M. Sciandrone, A concave optimizationbased approach for sparse portfolio selection. Optimization Method and Software, 27: 983-1000, 2012.

\bibitem{teng12}
Y. Teng, L. Yang, B. Yu and X. Song, Penalty PALM Method for Sparse Portfolio Selection Problems, 32(1): 126¨C147, 2017.

\bibitem{dau13}
I. Daubechies, M. Defrise and D. M. Christine, An iterative thresholding algorithm for linear inverse problems with a sparsity constraint. Communications on Pure and Applied Mathematics, 57(11): 1413-1457, 2004.

\bibitem{gem14}
D. Geman, G. Reynolds, Constrained restoration and recovery of discontinuities. IEEE Transactions on Pattern Analysis and Machine Intelligence, 14(3): 367-383, 1992.

\bibitem{li15}
H. Li, Q. Zhang, A. Cui and J. Peng. Minimization of fraction function penalty in compressed sensing. https://arxiv.org/pdf/1705.06048.pdf.

\bibitem{xing16}
F. Xing, Investigation on solutions of cubic equations with one unknown. Journal of the Central University for Nationalities (Natural Sciences Edition), 12(3): 207-218, 2003.

%\bibitem{xu17}
%F. Xu, G. Wang, Y. Gao. Nonconvex L 1/2 regularization for sparse portfolio selection. Pacific Journal of Optimization, 10(1): 163-176, 2014.

\bibitem{efron18}
B. Efron, T. Hastie, T. Johnstone and R. Tibshirani, Least angle regression. The Annals of Statistics, 32(2): 407-499, 2004.






%\bibitem{bro38}
%J. Brodie, I. Daubechies, C. D. Molc, D. Giannoned, and I. Lorisce. Sparse and stable markowitz portfolios. Proceedings of the National Academy of Sciences of the United States of America,
%106(30): 12267-12272 (2009)
%
%\bibitem{li39}
%H. Li, Q. Zhang, A. Cui and J. Peng. Minimization of fraction function penalty in compressed sensing. arXiv:1705.06048v1 [math.OC] 17 May 2017
%
%\bibitem{cui40}
%A. Cui, J. Peng, H. Li, C. Zhang, and Y. Yu. Affine matrix rank minimization problem via non-convex fraction function penalty. arXiv:1611.07777v3 [math.OC] 30 Apr 2017
%

\end{thebibliography}

% Non-BibTeX users please use

\end{document}